\numberwithin{equation}{section}
\theoremstyle{plain}
\newtheorem{theorem}{Theorem}[section]
\newtheorem{lemma}[theorem]{Lemma}
\theoremstyle{definition}
\newtheorem{thm}{Theorem}[section]
\theoremstyle{plain}
\newtheorem{lem}[theorem]{Lemma}
\theoremstyle{plain}
\newtheorem{cor}[theorem]{Corollary}
\theoremstyle{definition}
\newtheorem{defn}[theorem]{Definition}
\theoremstyle{remark}
\newtheorem{rem}[theorem]{Remark}
\newcommand{\Rmnum}[1]{\expandafter\@slowromancap\romannumeral #1@}
\newcommand{\mr}{\mathbb{R}}
\newcommand{\ud}{\mathrm{d}}
\def\R{\mathbb R}
\def\S{\mathbb S}
\def\<{\langle}
\def\>{\rangle}
\address{Mingxiang Li, Department  of Mathematics \& Institue of Mathematical Sciences, The Chinese University of Hong Kong, Shatin, NT, Hong Kong   }
\email{mingxiangli@cuhk.edu.hk}
\address{Biao Ma, Beijing International Center for Mathematical Research, China}
\email{biaoma@bicmr.pku.edu.cn}
\begin{document}
	\title[Complete conformal metrics with prescribed Q-curvature]{Existence of complete  conformal metrics on $\mr^n$ with   prescribed Q-curvature}
	\author{Mingxiang Li, Biao Ma}
	\date{}
	\maketitle
	\begin{abstract}
		Given a smooth function $f(x)$ on $\mr^n$ which is  positive somewhere and  satisfies $f(x)=O(|x|^{-l})$ for any $l>\frac{n}{2}$, we show that there exists a complete and  conformal metric $g=e^{2u}|dx|^2$ with finite total Q-curvature such that its  Q-curvature  equals to $f(x)$.
	\end{abstract}
	
	\section{Introduction}

The prescribed curvature problems have a long and extensive history. For instance, the celebrated  Nirenberg's problem seeks a conformal metric on the standard sphere 
$(\S^2,g_0)$ such that a prescribed function 
$K(x)$ serves as the Gaussian curvature of the conformal metric 
$g=e^{2u}g_0$. Nirenberg's problem has garnered significant attention, leading to a wealth of results documented in works such as \cite{CY Acta,CY JDG, Han, JLX,KZ 74 Compact,KZ 74 open,Stru} among others. In higher dimensions, the prescribed Q-curvature problem on $\S^n$ has been explored in \cite{Brendle,Mal-Stru,WX}, with additional insights into Q-curvature provided in \cite{BCY} and \cite{CY}.

In this paper, we investigate the prescribed Q-curvature problem on complete non-compact manifolds, with the Euclidean space serving as the classical model. More specifically, we consider the existence of complete conformal metric on $\R^n$ with Q-curvature equal to a given function. In terms of differential equations, we study the following equation:
\begin{equation}\label{equ: originial eqaution}
(-\Delta)^{\frac{n}{2}}u(x) = Q(x)e^{nu(x)}, \quad \text{on} \quad x\in\mathbb{R}^n,
\end{equation}
	where $n\geq2$ is an even integer and  $Q(x)$ is a given function that serves as the Q-curvature of the  conformal metric $g=e^{2u}|dx|^2$. For the case $n=2$,	this problem has been extensively studied by Aviles \cite{Aviles}, McOwen \cite{McOwen},  Cheng-Lin \cite{Cheng-Lin MA},  \cite{Cheng-Lin Pisa} and many others. For $n\geq 4$, several existence results for equation \eqref{equ: originial eqaution} have been established by  Chang-Chen \cite{Chang-Chen}, Wei-Ye \cite{Wei-Ye}, Hyder, Mancini and Martinazzi \cite{HMM}, Hyder and Martinazzi \cite{HM}, Huang-Ye-Zhou \cite{Huang-Ye-Zhou} and  many others. See \cite{CQY} for further properties of complete metrics associated with Q-curvature on $\R^n$.  We say that the conformal metric  $g=e^{2u}|dx|^2$ with finite total Q-curvature if $Qe^{nu}\in L^1(\mr^n)$.
    
    For prescribed Q-curvature problem on $\R^n$, our result is stated as follows.
\begin{theorem}\label{thm:main theorem}
		Given a smooth function $f(x)$ on $\R^n$ which is positive somewhere and satisfies  $f(x)=O(|x|^{-l})$
		where $l>\frac{n}{2}$ and $n$ is an even integer. Then there exists a complete  and conformal metric $g=e^{2u}|dx|^2$ with finite total Q-curvature such that its  Q-curvature equals to $f$.
	\end{theorem}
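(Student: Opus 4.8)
The plan is to construct the solution $u$ as a perturbation of a fixed "background" solution with constant curvature, using a variational or fixed-point argument adapted to the decay rate $f(x) = O(|x|^{-l})$ with $l > n/2$.

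First I would fix a model solution: the standard bubble $u_0 = \log\frac{2\lambda}{1+\lambda^2|x|^2}$ (suitably normalized) solves $(-\Delta)^{n/2}u_0 = (n-1)!\,e^{nu_0}$ on $\mathbb{R}^n$ and has finite total Q-curvature. More flexibly, since we only need *some* complete conformal metric, I would instead look for $u$ of the form $u = w + v$, where $w$ is an explicitly chosen function with prescribed logarithmic behavior at infinity (to guarantee completeness of $e^{2u}|dx|^2$, one needs $u(x) \ge -C - (1+\varepsilon)\log|x|$ or more precisely the divergence of $\int e^{u}$ along rays), and $v$ is the unknown solving an equation with better-behaved right-hand side. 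The key analytic input is the representation formula: any solution with finite total Q-curvature $\alpha := \frac{1}{(n-1)!\,\omega_{n-1}}\int f e^{nu}$ satisfies, up to a polynomial, $u(x) = \frac{1}{(n-1)!\,\omega_{n-1}}\int_{\mathbb{R}^n}\log\frac{|y|}{|x-y|} f(y)e^{nu(y)}\,dy + C$, and the asymptotic behavior $u(x) = -\alpha\log|x| + o(\log|x|)$ as $|x|\to\infty$.

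The main steps: (1) Reformulate \eqref{equ: originial eqaution} as the integral equation $u = T(u)$ where $T(u)(x) = \frac{1}{c_n}\int \log\frac{|y|}{|x-y|} f(y)e^{nu(y)}\,dy + \beta$, with $\beta$ a parameter to be fixed and $c_n = (n-1)!\,\omega_{n-1}$; here the hypothesis $f = O(|x|^{-l})$, $l > n/2$, is exactly what makes the nonlinear term integrable once $u$ has the expected $-\alpha\log|x|$ decay (we need $nu \approx -n\alpha\log|x|$ to beat $|x|^{-l}$, which forces a constraint linking $\alpha$ and $l$, controlled by choosing $\beta$ large negative so that $e^{nu}$ is small and $\alpha < $ the critical threshold). (2) Set up a fixed-point scheme in a weighted space such as $\{u : |u(x) - \beta| \le A\log(2+|x|)\}$ for suitable $A < l/n$; show $T$ maps this set to itself and is a contraction (or use Schauder) for $\beta$ sufficiently negative, since then $\int f e^{nu}$ is small. (3) Verify the resulting $u$ is smooth (elliptic regularity / bootstrapping, using smoothness of $f$) and that $g = e^{2u}|dx|^2$ is complete — this requires $u$ to not decay too fast, i.e. the total curvature $\alpha = \frac{1}{c_n}\int f e^{nu}$ must satisfy $\alpha < 1$ (the borderline for completeness in the normalization where $e^{2u_0}|dx|^2$ is the round sphere), again arranged by the smallness from $\beta \to -\infty$. (4) Handle the sign issue: $f$ is only positive somewhere, possibly negative elsewhere; one writes $f e^{nu} = f_+ e^{nu} - f_- e^{nu}$ and checks the negative part does not destroy the estimates, and that $\int f e^{nu} > 0$ is automatic or arrangeable so the metric genuinely "opens up."

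The hard part, I expect, will be simultaneously achieving completeness and finite total curvature: pushing $\beta \to -\infty$ makes the fixed-point argument and all integrability estimates easy, but it also shrinks $\alpha = \frac{1}{c_n}\int f e^{nu}$ toward $0$, and one must confirm $\alpha > 0$ strictly (using that $f$ is positive somewhere, so $f_+ e^{nu}$ contributes a definite positive amount) while the decay $u \sim -\alpha \log|x|$ with small $\alpha$ is still slow enough for $\int_1^\infty e^{u(r\theta)}\,dr = \infty$, giving completeness. A secondary technical obstacle is the possible presence of a nontrivial polynomial in the representation formula (solutions of $(-\Delta)^{n/2}v = 0$ with controlled growth need not vanish for $n \ge 4$); I would rule this out by working within the class where $u$ grows slower than $\log|x| + $ constant, forcing the polynomial to be constant, which is consistent with our ansatz.
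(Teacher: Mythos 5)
There is a genuine gap, and it sits exactly where the theorem is hard. Your whole scheme hinges on a smallness mechanism: push $\beta\to-\infty$ so that $e^{nu}$ is small, $T$ becomes a contraction, and the total curvature $\alpha=\frac{1}{c_n}\int_{\R^n} fe^{nu}\,dx$ is small (in particular $<1$, giving completeness). But in the range $\frac{n}{2}<l\le n$ — which is the essential content of the statement, since the threshold $l>\frac n2$ is sharp — small $\alpha$ is \emph{forbidden}. Any fixed point of your integral operator is a normal solution and hence satisfies $u(x)=-\alpha\log|x|+\beta+o(\log|x|)$ at infinity; if $f$ actually decays like $|x|^{-l}$ (e.g.\ $f=(1+|x|^2)^{-l/2}$, which is allowed and positive), finiteness of $\int fe^{nu}$ forces $n\alpha+l>n$, i.e.\ $\alpha>1-\frac ln>0$ when $l\le n$. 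So $\alpha$ must stay bounded away from $0$, and letting $\beta\to-\infty$ drives the would-be solution out of existence rather than into a perturbative regime. The same issue makes $T$ ill-defined on your trial set: for $u\le\beta+A\log(2+|x|)$ one only gets $fe^{nu}=O(|x|^{-l+nA})$, which is integrable only if $A<\frac{l-n}{n}$, impossible for $l\le n$; even $u\le\beta+C$ does not suffice. In other words, the problem is not a small-data fixed point: one must land $\alpha$ inside a window whose lower end is strictly positive and whose upper end is $1$ (completeness), and producing a solution with such a \emph{prescribed, non-small} total curvature is precisely where quantitative inequalities with sharp constants are needed. The paper does this variationally: it writes $u=u_0+v$ with $u_0=-\alpha\log|x|$ at infinity, minimizes a McOwen-type functional for $v$ in a weighted $W^{\frac n2,2}$ space, and the coercivity comes from a weighted Moser--Trudinger--Adams inequality (Lam--Lu type), which holds exactly when $\alpha<2\min\{1,\alpha+\frac ln-1\}$; intersecting this with $\alpha<1$ is where $l>\frac n2$ enters. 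Your outline mentions only an upper "critical threshold" for $\alpha$ and never identifies the lower constraint, so the proposed mechanism fails for $\frac n2<l\le n$.

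Two secondary points. First, for $l>n$ (so that $f\in L^1$ and small $\alpha$ is consistent) a contraction/Schauder argument of the kind you sketch is plausible and related fixed-point approaches exist (cf.\ \cite{Huang-Ye-Zhou} for nonpositive curvature), but that proves a strictly weaker statement than Theorem \ref{thm:main theorem}. Second, the assertion that $\int fe^{nu}>0$ is "automatic or arrangeable" when $f$ changes sign needs an actual argument: in your regime the weight $e^{nu}$ is essentially $e^{n\beta}$ times a profile determined by the solution itself, and with $f$ negative on a large set one must rule out cancellation (the paper avoids this by building positivity of the total integral into the constraint set $\mathcal H_K$ and prescribing the value $\frac{(n-1)!|\mathbb{S}^n|}{2}\alpha$ from the start). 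Your handling of the polynomial ambiguity via the integral formulation is fine and is not an issue.
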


We should remark that the completeness condition plays an important role here. If completeness is not assumed, the existence of solutions to \eqref{equ: originial eqaution} has been established in Theorem 2.1 of \cite{Chang-Chen} for any function $f(x)$ that is positive somewhere and satisfies the decay condition $f(x)=O(|x|^{-l})$ with $l>0$.  However, the situation becomes significantly different when we require a complete metric. In \cite{Li 24}, certain obstructions to the prescribed Q-curvature problem are identified for complete and conformal metrics on $\R^n$ with finite total Q-curvature. Specifically, when 
$f(x)$ is positive, the result can be stated as follows.

	 \begin{theorem}\label{thm obstruction}(Theorem 1.5 in \cite{Li 24})
	 		Given a  positive and smooth function  $f(x)$ on $\mr^n$ where $n\geq 2$ is an even integer and $f(x)$ satisfies 
	 	\begin{equation}\label{condition for f}
	 		\frac{x\cdot \nabla f(x)}{f(x)}\geq -\frac{n}{2}.
	 	\end{equation}
	 	Then there is no complete conformal metric $g=e^{2u}|dx|^2$ on $\mr^n$  with finite total Q-curvature  such that its  Q-curvature  eqauls to $f$.
	 \end{theorem}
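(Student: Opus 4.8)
The plan is to establish a Kazdan--Warner / Pohozaev type integral identity for the dilation vector field $x\cdot\na$ and then to show that completeness forces its ``defect at infinity'' to have a sign incompatible with $\frac{x\cdot\na f}{f}\geq-\frac n2$. Throughout, assume for contradiction that $u$ solves $(-\De)^{n/2}u=fe^{nu}$ on $\R^n$ with $fe^{nu}\in L^1(\R^n)$ and with $g=e^{2u}|dx|^2$ complete. Write $m:=\int_{\R^n}fe^{nu}\,dx>0$ (positive since $f>0$), and let $\ga_n$ be the constant with $(-\De)^{n/2}\bigl(\frac1{\ga_n}\log\frac1{|x|}\bigr)=\de_0$ on $\R^n$, so $\ga_2=2\pi$.

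The first step is the Pohozaev identity on balls. Multiplying the equation by $x\cdot\na u$ and integrating over $B_R$, the right-hand side becomes, after writing $e^{nu}(x\cdot\na u)=\tfrac1n x\cdot\na(e^{nu})$ and integrating by parts,
\[
\int_{B_R}fe^{nu}(x\cdot\na u)\,dx=\frac Rn\oint_{\partial B_R}fe^{nu}\,d\si-\int_{B_R}fe^{nu}\,dx-\frac1n\int_{B_R}e^{nu}(x\cdot\na f)\,dx .
\]
On the left-hand side, since $n=2\cdot\tfrac n2$ is the critical (conformal) dimension for $(-\De)^{n/2}$, the usual bulk term of the polyharmonic Pohozaev identity carries the vanishing factor $\tfrac{n-2(n/2)}{2}=0$, so $\int_{B_R}(-\De)^{n/2}u\,(x\cdot\na u)\,dx$ equals a pure boundary integral $\mathcal B_R[u]$ over $\partial B_R$ built from the derivatives of $u$ of orders $1$ through $n-1$; for $n=2$ this is $\oint_{\partial B_R}\bigl(\tfrac R2|\na u|^2-R|\partial_\nu u|^2\bigr)\,d\si$.

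Next I would pass to the limit $R\to\infty$. The key input is the asymptotic profile of $u$: by the theory of conformal metrics with finite total Q-curvature (representation $u=v+p$ with $v(x)=\tfrac1{\ga_n}\int\log\tfrac{|y|}{|x-y|}fe^{nu}\,dy$ and $p$ polyharmonic of polynomial growth, cf.\ Chang--Chen, Lin, Martinazzi), one shows here that $p$ must be constant: a nonconstant polynomial tends to $+\infty$ or to $-\infty$ along some ray; in the first case $e^{nu}$ grows faster than any power along that ray, which, combined with the bound $f(x)\geq c|x|^{-n/2}$ at infinity --- obtained by integrating $\frac{x\cdot\na f}{f}\geq-\frac n2$ along rays --- contradicts $fe^{nu}\in L^1$; in the second case $g$ has a ray of finite length, contradicting completeness. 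Hence $u(x)=-\al\log|x|+C+o(1)$, $\na u(x)=-\al\tfrac{x}{|x|^2}+o(|x|^{-1})$ as $|x|\to\infty$ (and similarly for higher derivatives), with $\al=m/\ga_n>0$. Since $fe^{nu}\in L^1$ we have $\liminf_{R\to\infty}R\oint_{\partial B_R}fe^{nu}\,d\si=0$, so along a sequence $R_k\to\infty$ the first term on the right vanishes; the two remaining bulk terms converge to $-m-\tfrac1n I$ with $I:=\int_{\R^n}e^{nu}(x\cdot\na f)\,dx$; and the refined asymptotics give $\mathcal B_{R_k}[u]\to-\kappa_n\al^2$ for a universal constant $\kappa_n>0$ (with $\kappa_2=\pi$, and $\kappa_n=\ga_n/2$ in general, by a direct computation with the explicit polyharmonic Pohozaev boundary form). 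As the left side has a finite limit, $I$ is finite and we obtain
\[
\kappa_n\,\al^2=m+\frac1n\,I .
\]

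Finally, to conclude: writing $I=\int_{\R^n}fe^{nu}\,\tfrac{x\cdot\na f}{f}\,dx$ and using $\tfrac{x\cdot\na f}{f}\geq-\tfrac n2$ with $fe^{nu}>0$ gives $I\geq-\tfrac n2 m$, and the inequality is \emph{strict} since $f$ is smooth and positive, so $\tfrac{x\cdot\na f}{f}\to0$ as $x\to0$ and hence $\tfrac{x\cdot\na f}{f}>-\tfrac n2$ on a set of positive measure. Therefore $\kappa_n\al^2=m+\tfrac1n I>m-\tfrac m2=\tfrac m2=\tfrac{\ga_n}{2}\al$, i.e.\ $\al>\tfrac{\ga_n}{2\kappa_n}=1$. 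On the other hand, completeness of $g=e^{2u}|dx|^2$ with $u\sim-\al\log|x|$ forces the radial rays to have infinite length, i.e.\ $\int^\infty r^{-\al}\,dr=\infty$, hence $\al\leq1$ --- a contradiction, proving the nonexistence. The main obstacle is the asymptotic analysis above: establishing the logarithmic profile of $u$ and its derivatives, equivalently that the polynomial part is constant --- this is precisely where the hypothesis $\tfrac{x\cdot\na f}{f}\geq-\tfrac n2$ does work beyond the Pohozaev identity --- and, for general even $n$, identifying the polyharmonic Pohozaev boundary operator and verifying $\kappa_n=\ga_n/2$.
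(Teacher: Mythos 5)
First, a caveat: this paper does not actually prove Theorem \ref{thm obstruction}; it is quoted from Theorem 1.5 of \cite{Li 24}, so there is no in-paper proof to compare against. Judged on its own terms, your outline --- reduce to a normal solution, apply a dilation (Pohozaev/Kazdan--Warner) identity to force $\alpha>1$ under \eqref{condition for f}, and play this against the fact that completeness of a normal metric forces $\alpha\le 1$ --- is very plausibly the intended architecture, your identity $\frac{\gamma_n}{2}\alpha^2=m+\frac1n\int_{\R^n} e^{nu}\,x\cdot\nabla f\,dx$ reduces correctly to the classical one for the Gauss curvature equation when $n=2$, and your insistence on strictness near $x=0$ to exclude the borderline case $\alpha=1$ (cylindrical ends are complete) is a genuinely necessary point that you got right.

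Nevertheless, as written the argument has real gaps exactly at its load-bearing steps. (i) Reduction to normality: growth of $e^{nu}$ along a single ray does not contradict $fe^{nu}\in L^1(\R^n)$, since a ray has measure zero; and in the opposite case, finiteness of the length of a ray requires an upper bound on the potential part $v$ along that ray, which can fail pointwise because $v$ has logarithmic spikes --- both cases need averaged estimates (spherical means, good radii or curves chosen by a Fubini argument). Moreover, before any such dichotomy one must prove that $u-v$ is a polynomial at all; this is a Liouville-type theorem for polyharmonic functions requiring growth control extracted from $\int e^{nu}|x|^{-n/2}dx<\infty$, and the results you cite (Lin, Martinazzi) are stated for bounded $Q$, whereas here $f$ need not be bounded above. (ii) The limit of the polyharmonic Pohozaev boundary form requires asymptotics of $\nabla^j u$ for $1\le j\le n-1$ which, under $fe^{nu}\in L^1$ alone, hold only in an averaged sense or along good sequences of radii; and the constant $\kappa_n=\gamma_n/2$ is asserted rather than proved for $n\ge 4$ (you flag this yourself). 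Once normality is established there is a cleaner route that bypasses this entirely: insert the potential representation of $u$ into $\int (x\cdot\nabla u)fe^{nu}dx$ and symmetrize the kernel $x\cdot\nabla_x\log\frac{|y|}{|x-y|}$, which yields $-\frac{\gamma_n}{2}\alpha^2$ with no boundary computation. (iii) The final step ``completeness $\Rightarrow\alpha\le1$'' is itself a nontrivial theorem (a generalized Cohn--Vossen inequality; \cite{CQY} for $n=4$), and your one-line radial-ray argument suffers from the same pointwise-asymptotics issue as (i); note that Theorem \ref{thm: complete}, as quoted here, only gives $\lim\frac{\log d_g(x,p)}{\log|x-p|}=0$ when $\alpha\ge1$, which by itself does not imply incompleteness. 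In short, the skeleton is right, but the three items above are where the actual proof lives, and the sketch does not yet establish them.
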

	
	 Condition \eqref{condition for f} implies that $f(x)\geq C|x|^{-\frac{n}{2}}$ for $|x|\gg1$. Consequently, when the decay rate of 
$f(x)$ is slower than $|x|^{-\frac{n}{2}}$, there are obstructions to  complete metrics with finite total Q-curvature. Therefore, the result demonstrated in Theorem \ref{thm:main theorem} is sharp, in view of the decay rate.

We briefly outline our strategy. We employ the variational method to find a solution to the prescribed
Q-curvature equation \eqref{equ: originial eqaution} under an appropriate prescribed total Q-curvature, following the approach developed in \cite{McOwen}. In contrast to the two-dimensional case, our analysis requires a weighted Moser-Trudinger-Adams type inequality. Subsequently, by utilizing the distance growth identity established in \cite{Li 23 Q-curvature}, we demonstrate that, under the assumption of a specific decay rate, we can construct a complete metric.

In a recent paper \cite{Huang-Ye-Zhou}, Huang, Ye, and Zhou proved the existence of conformal metrics on $\R^n$ with prescribed non-positive Q-curvature. Their argument is based on Leray-Schauder fixed point theorem. In contrast, we require Q-curvature to be positive at some point and use the variational method.  
    
    The structure of this paper goes naturally. In section \ref{sec2}, we discuss the function spaces that are used in our variational approach. In section \ref{section 2}, we state and present a proof for a weighted Moser-Trudinger-Admas type inequality. Then in section \ref{section 3}, we finish our  proof of Theorem \ref{thm:main theorem}.

	\section{Function spaces}\label{sec2}
	In this section, we introduce the function space that fits our variational scheme.

	Let $g_{0}=|dx|^{2}$ be the standard Euclidean metric. Let $B_{1}(0)$
	be a unit ball in $\R^{n}$. Let $g_1=e^{2w}g_{0}$ be a conformal
	metric where $w$ is a smooth function on $B_{1}\backslash\{0\}$.
	We denote for $u\in C_{0}^{\infty}(B_{1}(0))$ and $i=0,1$,
	\[
	\|u\|_{W_{0}^{k,p}(B_{1}(0),g_{i})}:=\sum_{l=0}^{k}|\int_{B_{1}(0)\backslash\{0\}}|\nabla_{g_{i}}^{l}u|^{p}dV_{g_{i}}|^{\frac{1}{p}}.
	\]
	We denote $W_{0}^{k,p}(B_{1}(0),g_{i})$ to be the closure of $C_{0}^{\infty}(B_{1}(0))$
	under $\|\cdot\|_{W_{0}^{k,p}(B_{1}(0),g_{i})}$.

    The following well known Hardy-Sobolev inequality is very useful to compare function
	spaces with different norms. 
	\begin{lem}
		(Hardy-Sobolev inequality) Let $p>0$ be a positive number. Then, for $u\in W_{0}^{k,p}(B_{1})$ with $kp<n$,
		we have 
		\begin{equation}
			\left\Vert \frac{u}{|x|^{k}}\right\Vert _{L^{p}(B_{1})}\leq\|\nabla^{k}u\|_{L^{p}(B_{1})}.\label{eq:-14}
		\end{equation}
	\end{lem}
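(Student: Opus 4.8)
The plan is to reduce the inequality to smooth compactly supported functions and then obtain it by iterating a one-step weighted Hardy inequality $k$ times.

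\emph{Step 1: reduction.} Since $W_0^{k,p}(B_1)$ is the closure of $C_0^\infty(B_1(0))$ in the $W_0^{k,p}$-norm, it suffices to establish \eqref{eq:-14} for $u\in C_0^\infty(B_1(0))$, extended by zero to a smooth, compactly supported function on $\R^n$; the general case then follows by approximating $u$ in $W_0^{k,p}$ and applying Fatou's lemma on the left-hand side. Observe that the hypothesis $kp<n$ guarantees $|x|^{-k}\in L^p_{\mathrm{loc}}(\R^n)$, so the left-hand side of \eqref{eq:-14} is finite for such $u$ even when $u(0)\neq 0$.

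\emph{Step 2: a one-step weighted Hardy inequality.} I claim that for every $\beta$ with $0\le\beta<n-p$ and every $v\in C_0^\infty(\R^n)$,
\[
\big\|\,|x|^{-(1+\beta/p)}v\,\big\|_{L^p(\R^n)}\ \le\ \frac{p}{\,n-p-\beta\,}\,\big\|\,|x|^{-\beta/p}\nabla v\,\big\|_{L^p(\R^n)}.
\]
To prove this, apply the divergence theorem to the vector field $X=|x|^{-(\beta+p)}|v|^p\,x$ on $\R^n\setminus B_\varepsilon(0)$. Using $\operatorname{div}\big(|x|^{-(\beta+p)}x\big)=(n-\beta-p)|x|^{-(\beta+p)}$, and noting that the boundary term over $\partial B_\varepsilon(0)$ is $O(\varepsilon^{\,n-\beta-p})\to 0$ as $\varepsilon\to 0$ (this is where $\beta+p<n$ is used), one gets
\[
(n-\beta-p)\int_{\R^n}\frac{|v|^p}{|x|^{\beta+p}}\,dx = -p\int_{\R^n}\frac{|v|^{p-2}v\,(x\cdot\nabla v)}{|x|^{\beta+p}}\,dx\ \le\ p\int_{\R^n}\frac{|v|^{p-1}|\nabla v|}{|x|^{\beta+p-1}}\,dx,
\]
and Hölder's inequality with exponents $p/(p-1)$ and $p$ yields the claim after dividing by $(n-\beta-p)$ and taking $p$-th roots.

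\emph{Step 3: iteration and the main obstacle.} Apply Step 2 successively to the components of $\nabla^{j}u$ with $\beta=(k-j-1)p$, for $j=0,1,\dots,k-1$; the pointwise Kato inequality $\big|\nabla|\nabla^j u|\big|\le|\nabla^{j+1}u|$ lets us pass from the scalar inequality to the tensor norms at the cost of harmless dimensional constants, and the admissibility condition $\beta<n-p$ at stage $j$ reads $(k-j)p<n$, which holds since $kp<n$. Chaining the $k$ estimates gives \eqref{eq:-14} with the constant $1$ replaced by $\prod_{m=1}^{k}\frac{p}{\,n-mp\,}$; one verifies that this product does not exceed $1$ for the values of $(n,p,k)$ arising in this paper, and in any case only its finiteness (depending on $n,p,k$ alone) is used in the sequel. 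The only genuine care in the argument is at the origin: elements of $W_0^{k,p}(B_1(0))$ need not vanish near $0$, so the weighted integrals and the integration by parts must be justified through the exhaustion $\R^n\setminus B_\varepsilon(0)$ together with the strict inequality $n-\beta-p>0$ governing the boundary term; the remaining steps — the tensor-to-scalar reduction and the bookkeeping of the constant — are routine.
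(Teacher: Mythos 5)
The paper does not prove this lemma at all --- it is invoked as a ``well known'' inequality and used only qualitatively (with unspecified constants) in Corollary \ref{cor:Suppose-that-,} --- so there is no in-paper argument to compare with; your proof supplies the standard one and it is essentially correct. The one-step weighted Hardy inequality in Step 2 is right: the divergence computation, the vanishing of the $\partial B_\varepsilon$ flux under $\beta+p<n$, and the H\"older split with weights $(\beta+p)(p-1)/p$ and $\beta/p$ all check out, and the iteration with $\beta=(k-j-1)p$ correctly uses $kp<n$ at every stage. Two small caveats you should make explicit. First, your H\"older step needs $p>1$ (and $p=1$ works trivially without it), whereas the lemma is stated for all $p>0$; for $0<p<1$ neither your argument nor the inequality in this form is standard, but since the paper only ever applies the lemma with $p=2$ this is a defect of the statement rather than of your proof. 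Second, your honest constant is $\prod_{m=1}^{k}\frac{p}{n-mp}$ (after handling the passage from tensors to scalars via the Kato inequality $\bigl|\nabla|\nabla^{j}u|\bigr|\le|\nabla^{j+1}u|$, which costs nothing provided you note that the one-step inequality extends to Lipschitz compactly supported functions by mollification --- if instead you argue componentwise you do pick up dimensional constants and the claimed product is no longer the final constant). The constant $1$ asserted in the lemma is in fact not correct in general (already for $k=1$ the sharp constant $\frac{p}{n-p}$ exceeds $1$ when $p<n<2p$), so your hedge is the right one: for the paper's range ($p=2$, orders at most $\frac{n}{2}-1$, so $n-2m\ge 2$ for every factor) the product is indeed $\le 1$, and in any case only a finite constant $C(n,k,p)$ is used downstream.
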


    By Hardy-Sobolev inequality, we can show the equivalence of $W^{\frac{n}{2},2}$ spaces.
	\begin{cor}
		\label{cor:Suppose-that-,}Suppose that $\alpha\in(0,1]$, $w=(\alpha-1)\log|x|+h$
		where $h$ is a smooth function in $\overline{B_{1}(0)}$. Let $g_{1}=e^{2w}|dx|^{2}$.
		Then, we have 
		\[
		W_{0}^{\frac{n}{2},2}(B_{1}(0),g_{1})\simeq W_{0}^{\frac{n}{2},2}(B_{1}(0)).
		\]
	\end{cor}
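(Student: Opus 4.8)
The plan is to prove that the two norms are equivalent when restricted to $C_0^\infty(B_1(0))$; since by definition each space is the completion of $C_0^\infty(B_1(0))$ for the corresponding norm, this yields the stated identification. Write $m=n/2\in\mathbb{N}$ (an integer, as $n$ is even). The basic input is the behaviour of covariant derivatives under the conformal change $g_1=e^{2w}g_0$. Relative to the flat connection the Christoffel symbols of $g_1$ are $\Gamma^k_{ij}=\delta^k_i\partial_j w+\delta^k_j\partial_i w-\delta_{ij}\partial_k w$, and iterating this identity yields, for $u\in C_0^\infty(B_1(0))$, an expansion
\[
\nabla^l_{g_1}u=\nabla^l u+\sum_{j=1}^{l-1}P_{l,j}\,\nabla^j u ,
\]
where $P_{l,j}$ is a fixed polynomial each of whose monomials is a product $\nabla^{a_1}w\cdots\nabla^{a_r}w$ with $a_1+\dots+a_r=l-j$; the lowest derivative of $u$ occurring is $\nabla^1 u$, since $\nabla^1_{g_1}u=du$ carries no correction. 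As $w=(\alpha-1)\log|x|+h$ with $h\in C^\infty(\overline{B_1(0)})$ and $|\nabla^a\log|x||\le C_a|x|^{-a}$ for $a\ge 1$, we get $|P_{l,j}|\le C|x|^{-(l-j)}$ on $B_1(0)\setminus\{0\}$. Combining this with $dV_{g_1}=e^{nw}\,dx$ and $|T|^2_{g_1}=e^{-2lw}|T|^2_{g_0}$ for a covariant $l$-tensor $T$ gives
\[
\int_{B_1\setminus\{0\}}|\nabla^l_{g_1}u|^2_{g_1}\,dV_{g_1}=\int_{B_1\setminus\{0\}}e^{(n-2l)w}|\nabla^l_{g_1}u|^2_{g_0}\,dx ,
\]
and the decisive features are that for $l=m$ the conformal weight $e^{(n-2m)w}$ is identically $1$, while for $l<m$ one has $c\le e^{(n-2l)w}\le C|x|^{-(n-2l)(1-\alpha)}$ on $B_1(0)$, because $\alpha\in(0,1]$ forces $(n-2l)(\alpha-1)\in(-(n-2l),0]$.

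For the inclusion $W_0^{m,2}(B_1,g_1)\hookrightarrow W_0^{m,2}(B_1)$, the previous paragraph bounds every $l\ge 1$ term of $\|u\|^2_{W^{m,2}(B_1,g_1)}$ by a sum of integrals $\int_{B_1}|x|^{-2s}|\nabla^j u|^2\,dx$ with $1\le j\le l\le m$ and $s=\tfrac12(n-2l)(1-\alpha)+(l-j)$ (the first summand vanishing when $l=m$). One checks $0\le s\le m-j$, hence $k:=\lceil s\rceil\le m-j\le m-1$, so $2k<n$ and $j+k\le m$; since $|x|\le 1$ we may replace $|x|^{-2s}$ by $|x|^{-2k}$ and apply the Hardy--Sobolev inequality with exponent $k$ componentwise to $\nabla^j u$, giving $\int|x|^{-2s}|\nabla^j u|^2\le C\|\nabla^{j+k}u\|_{L^2}^2\le C\|u\|^2_{W^{m,2}(B_1)}$. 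The only piece not of this type is the zeroth-order term $\int e^{nw}|u|^2\le C\int|x|^{-n(1-\alpha)}|u|^2$, which is handled by Hölder together with the critical Sobolev embedding $W_0^{m,2}(B_1)\hookrightarrow L^q(B_1)$ for all $q<\infty$, using $n(1-\alpha)<n$. Summing over $l,j$ proves this inclusion.

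The reverse inclusion is the heart of the matter. By the Poincaré inequality on the bounded domain $B_1$ we have $\|u\|_{W^{m,2}(B_1)}\le C\|\nabla^m u\|_{L^2}$, so it suffices to show $\|\nabla^m u\|_{L^2}\le C\|u\|_{W^{m,2}(B_1,g_1)}$. Writing $\nabla^m_{g_1}u=\nabla^m u+R_m$ with $R_m=\sum_{j=1}^{m-1}P_{m,j}\nabla^j u$, expanding the square, and using conformal invariance at the top order ($\int|\nabla^m_{g_1}u|^2_{g_0}\,dx=\int|\nabla^m_{g_1}u|^2_{g_1}\,dV_{g_1}\le\|u\|^2_{W^{m,2}(B_1,g_1)}$), the task is reduced to estimating the cross term $\int_{B_1}\langle\nabla^m u,R_m\rangle_{g_0}\,dx$. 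This is the delicate point: a crude bound by Cauchy--Schwarz and Hardy's inequality only gives $|\int\langle\nabla^m u,R_m\rangle|\le C\|\nabla^m u\|^2_{L^2}$, which is \emph{circular}, since the weighted terms $\||x|^{-(m-j)}\nabla^j u\|_{L^2}$ have exactly the same homogeneity under dilations as $\|\nabla^m u\|_{L^2}$ and so cannot be absorbed by interpolation. I expect this to be the only genuine obstacle. The way around it is to integrate by parts in the cross term so as to bring out a favourable sign: for instance when $n=4$ one computes, after moving one derivative off $\nabla^2 u$ (the boundary contribution at $\{0\}$ vanishing since $n\ge 3$), that $\int\langle\nabla^2 u,R_2\rangle=-\int(\Delta w)\,|\nabla u|^2-\int\langle\nabla w,\nabla u\rangle\Delta u$; since $\Delta w=(\alpha-1)\tfrac{n-2}{|x|^2}+\Delta h$ with $\alpha-1\le 0$, the most singular contribution $(\alpha-1)\tfrac{n-2}{|x|^2}|\nabla u|^2$ has the right sign, and pairing it (via a weighted Cauchy--Schwarz with an appropriately tuned constant) against the remaining $|x|^{-1}$-singular term leaves $\|\nabla^2 u\|^2_{L^2}\le\int|\nabla^2_{g_1}u|^2_{g_0}\,dx+\big(\tfrac{1-\alpha}{2(n-2)}+o(1)\big)\|\nabla^2 u\|^2_{L^2}+C\|\nabla u\|^2_{L^2}$; as $\alpha>0$ the factor is strictly below $1$, so $\|\nabla^2 u\|^2_{L^2}$ is absorbed, and $\|\nabla u\|_{L^2}\le C\|u\|_{W^{2,2}(B_1,g_1)}$ because $e^{(n-2)w}\ge c$. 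The same mechanism — integrate by parts, extract the sign of $\alpha-1$ from $(\alpha-1)\Delta\log|x|\le 0$, and absorb a subcritical remainder — treats general even $n$ (the case $n=2$ being trivial, as $R_m=0$), the hypothesis $\alpha\le 1$ entering precisely at the sign. This completes the equivalence.
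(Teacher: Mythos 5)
Your first inclusion, $\|u\|_{W_0^{n/2,2}(B_1,g_1)}\le C\|u\|_{W_0^{n/2,2}(B_1)}$, is correct and is essentially the paper's own argument (the conformal expansion of $\nabla^l_{g_1}u$ with $|P_{l,j}|\le C|x|^{-(l-j)}$, the weight bound coming from $\alpha\le 1$, and the Hardy--Sobolev inequality); your treatment of the zeroth-order term via H\"older and the subcritical embedding is, if anything, more careful than the paper's. The divergence is in the reverse inclusion, which the paper settles with a one-sentence appeal to the Poincar\'e inequality and which you correctly single out as the place where all correction terms scale critically.

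However, your mechanism for the reverse inclusion rests on a sign error at its crux. With $R_2=\langle\nabla u,\nabla w\rangle g_0-\nabla u\otimes\nabla w-\nabla w\otimes\nabla u$ one has, for $u\in C_0^\infty(B_1)$,
\[
\int_{B_1}\langle\nabla^2u,R_2\rangle\,dx=\int_{B_1}\Delta u\,\langle\nabla u,\nabla w\rangle\,dx+\int_{B_1}|\nabla u|^2\,\Delta w\,dx,
\]
(either from $2\,\nabla^2u(\nabla u,\nabla w)=\nabla(|\nabla u|^2)\cdot\nabla w$ and one integration by parts, or from $\partial_i(R_2)_{ij}=-\Delta u\,\partial_jw-\partial_ju\,\Delta w$); this is the negative of the identity you state. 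Consequently, in the estimate $\int|\nabla^2u|^2\le\int|\nabla^2_{g_1}u|^2_{g_0}-2\int\langle\nabla^2u,R_2\rangle$ that you obtain after discarding $\int|R_2|^2\ge0$, the most singular contribution is $+2(1-\alpha)(n-2)\int|x|^{-2}|\nabla u|^2$, a critical term with the \emph{unfavourable} sign; bounding it by Hardy costs the factor $8(1-\alpha)/(n-2)$ (that is, $4(1-\alpha)$ when $n=4$), so the absorption closes only for $\alpha$ near $1$, not on the whole range $\alpha\in(0,1]$, and the remaining cross term is also critical and sign-indefinite. To rescue the scheme you must keep $\int|R_2|^2$ rather than discard it: for radial $u$ in $n=4$ with $h\equiv0$ one finds $\int_{B_1}|\nabla^2_{g_1}u|^2_{g_0}\,dx=|\S^3|\int_0^1\bigl((u'')^2+(4\alpha^2-1)r^{-2}(u')^2\bigr)r^3\,dr$, and it is only because $4\alpha^2-1>-1$ for $\alpha>0$, combined with the sharp Hardy constant, that the flat Hessian energy is recovered; one then still has to treat the non-radial modes and the considerably more involved correction tensors for even $n\ge6$. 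None of this is carried out, and the closing assertion that the same mechanism treats general even $n$ is unsupported; so the reverse inclusion, which you yourself call the heart of the matter, is not actually proved in your proposal.
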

	
	\begin{proof}
		We only need to check for function $u\in C_{0}^{\infty}(B_{1})$.
		We have 
		\[
		e^{w}\nabla_{g_{1}}u=\nabla u,\ e^{2w}\nabla_{g_{1}}^{2}u=\nabla^{2}u+\<\nabla u,\nabla w\>g_{0}-\nabla u\otimes\nabla w-\nabla w\otimes\nabla u.
		\]
		For higher order derivatives,
		\[
		e^{kw}\nabla_{g_{1}}^{k}u=\nabla^{k}u+\sum_{l=0}^{k-1}\nabla^{l}u*\Phi_{k-l}(\nabla w,\cdots,\nabla^{k-l}w),
		\]
		where $\Phi_{k-l}$ is a polynomial of covariant derivatives of $w$.
		$*$ means certain linear combinations. The explicit expression of
		$\Phi_{k-l}$ will not be used, but by induction, we have estimates
		\begin{align}
		    |\Phi_{k-l}|\leq C|x|^{-(k-l)}.
		\end{align}
        
		For $k\in\{0,\cdots,\frac{n}{2}\}$, since $h$ is smooth in $\overline{B_{1}(0)}$,
		there exists a constant $C_{1}=C_{1}(n,w)$ such that $e^{(n-2k)w}\leq C_{1}|x|^{2k-n}$. Then
		\begin{align*}
			\int_{B_{1}(0)}|\nabla_{g_{1}}^{k}u|^{2}dV_{g_{1}} & =\int_{B_{1}(0)}|e^{kw}\nabla_{g_{1}}^{k}u|^{2}e^{(n-2k)w}dx\\
			& \leq C_{2}\int_{B_{1}(0)}\left(|\nabla^{k}u|^{2}+\sum_{l=0}^{k-1}|\nabla^{l}u|^{2}|x|^{-2(k-l)}\right)|x|^{2k-n}dx\\
			& \leq C_{3}\int_{B_{1}(0)}|\nabla^{\frac{n}{2}}u|^{2}dx.
		\end{align*}
		Here $C_2,C_3$ are constants depending on $n,k,w$ and we used Hardy-Sobolev inequality (\ref{eq:-14}) in the last line. 
		Thus, \[
		\|u\|_{W_{0}^{\frac{n}{2},2}(B_{1}(0),g_{1})}\leq C(n,p,h)\|u\|_{W_{0}^{\frac{n}{2},2}(B_{1}(0))}.
		\]
        The other direction follows from the Poincar\'e inequality.
	\end{proof}
	The discussion above can be applied to function spaces on $\R^n$ with suitable weighted norms. Let $\epsilon>0$. Consider a positive and smooth function $\varphi:\mathbb{R}^n\to\R$ satisfying $\varphi=|x|^{-n(1+\epsilon)}$ for $|x|>1$.
	Let $\gamma=\frac{1}{n}\log \varphi $. Then 
	$e^{2\gamma}=|x|^{-2-2\epsilon},$
	for $|x|>1$. We denote 
	\begin{align}
	d\mu(x)=e^{n\gamma}dx.
	\end{align}

    Let $\sigma:\S^n\to\R^{n}$ be the standard stereographic projection
	from the north pole $x_{N}$. Let $g_{1}=\sigma^{*}\left(e^{2\gamma}|dx|^{2}\right)$.
	Then $g_{1}$ is a smooth metric on $\S^n$ away from the north pole.
	At the north pole, $g_{1}$ has a cone singularity if $\epsilon>0$. To see that, we
	take  $y=\frac{x}{|x|^{2}}.$ Then in $|y|<\frac{1}{2}$,
	we have
	\begin{equation}
		e^{2\gamma}|dx|^{2}=|y|^{2(\epsilon-1)}|dy|^{2}\label{eq:-4}
	\end{equation}
	in $B_{\frac{1}{2}}(0).$ This implies that $g_{1}$ has a cone singularity
	of index $\epsilon-1$ at the north pole. 
    
	\begin{defn}
	Notations as above.	Fix $\epsilon>0$, for $u\in C^{\infty}(\mathbb{R}^{n})$, let
		\begin{equation}
			\|u\|_{\mathcal{H}'}^{2}:=\sum_{j=0}^{\frac{n}{2}}\int_{\mathbb{R}^{n}}e^{(n-2j)\gamma}|\nabla^{j}u|^{2}dx.\label{eq:}
		\end{equation}
		Define a Hilbert space
		\[
		\mathcal{H}:=\overline{\{u\in C^{\infty}(\mathbb{R}^{n}):\|u\|_{\mathcal{H}'}<\infty\}},
		\]
		where the closure is taken under $\|\cdot\|_{\mathcal{H}'}$ norm. 
	\end{defn}
	
	We should remark that the function space $\mathcal{H}$ does not depend on the constant $\epsilon>0$. In fact, we will identify $\mathcal{H}$
	with $W^{\frac{n}{2},2}(\S^n,g_{S})$ where $g_{S}$ is the standard
	metric on $\S^n$. In order to be consistent with discussion in later sections, we denote
	\[
	\|u\|_{\mathcal{H}}^{2}:=\int_{\mathbb{R}^{n}}|(-\Delta)^{\frac{n}{4}}u|^{2}dx+\int_{\mathbb{R}^{n}}u^{2}d\mu.
	\]
	Since $\|\cdot\|_{\mathcal{H}'}$ is equivalent to $\|\cdot\|_{\mathcal{H}}$, this notation will not cause any problem.
	
	\begin{lem}
		\label{lem:There-exists-a}There exists a constant $C>0$ such that
		\[
		C^{-1}\|u\|_{\mathcal{H}}^2\leq\|u\|^2_{\mathcal{H}'}\leq C\|u\|_{\mathcal{H}}^2.
		\]
		As a result, we have 
		\[
		W^{\frac{n}{2},2}(\S^n,g_{1})\simeq W^{\frac{n}{2},2}(\S^n,g_{S})\simeq(\mathcal{H},\|\cdot\|_{\mathcal{H}})\simeq(\mathcal{H},\|\cdot\|_{\mathcal{H}'}).
		\]
	\end{lem}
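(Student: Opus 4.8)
The plan is to transplant everything to the round sphere $(\S^n,g_S)$ through the stereographic projection $\sigma$, and then to reduce the two genuinely delicate points --- the conical singularity of $g_1$ at the north pole $x_N$ and the weights $e^{(n-2j)\gamma}$ near infinity --- to Corollary \ref{cor:Suppose-that-,} by a further inversion. Write $g_0=|dx|^2$; for $u$ smooth on $\R^n$ set $v=u\circ\sigma$ (so $u=v\circ\sigma^{-1}$), and let $\hat g=e^{2\gamma}g_0$, so that $\sigma^*\hat g=g_1$. Naturality of the Levi--Civita connection under the diffeomorphism $\sigma$ gives, for $j\in\{0,\dots,\frac n2\}$,
\begin{align*}
\int_{\S^n}|\nabla_{g_1}^j v|_{g_1}^2\,dV_{g_1}
&=\int_{\R^n}|\nabla_{\hat g}^j u|_{\hat g}^2\,e^{n\gamma}\,dx\\
&=\int_{\R^n}\Big|\nabla^j u+\sum_{l=1}^{j-1}\nabla^l u*\Phi_{j-l}\Big|^2 e^{(n-2j)\gamma}\,dx,
\end{align*}
where the second line is the conformal transformation rule for covariant derivatives from the proof of Corollary \ref{cor:Suppose-that-,} (with $w=\gamma$), and $|\Phi_{j-l}|\le C$ on $B_1$ while $|\Phi_{j-l}|\le C|x|^{-(j-l)}$ on $\R^n\setminus B_1$ since $\gamma=-(1+\epsilon)\log|x|$ there. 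On the other side, the conformal covariance of the critical GJMS operator $P_{g_S}$ of order $n$ on $(\S^n,g_S)$ --- whose flat model is $(-\Delta)^{n/2}$, and for which $\int\phi P_g\phi\,dV_g$ is a conformal invariant --- yields $\int_{\R^n}|(-\Delta)^{n/4}u|^2\,dx=\int_{\S^n}vP_{g_S}v\,dV_{g_S}$, while $dV_{\hat g}=d\mu$ together with a change of variables gives $\int_{\R^n}u^2\,d\mu=\int_{\S^n}v^2\,dV_{g_1}$. Hence, once the lower-order terms $\nabla^l u*\Phi_{j-l}$ are shown to be absorbable, it suffices to prove
\[
\|u\|_{\mathcal H'}^2\simeq\sum_{j=0}^{n/2}\int_{\S^n}|\nabla_{g_1}^j v|_{g_1}^2\,dV_{g_1},\qquad W^{\frac n2,2}(\S^n,g_1)\simeq W^{\frac n2,2}(\S^n,g_S),
\]
\[
\langle P_{g_S}v,v\rangle_{L^2(\S^n,g_S)}+\int_{\S^n}v^2\,dV_{g_1}\simeq\|v\|_{W^{\frac n2,2}(\S^n,g_S)}^2.
\]

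For the first two equivalences the only real work is the lower-order terms $\nabla^l u*\Phi_{j-l}$ ($1\le l\le j-1$) and the singularity of $g_1$ at $x_N$, both of which are conical. I would split $\R^n=B_1\cup(\R^n\setminus\overline{B_1})$, and correspondingly $\S^n$ into a small ball about $x_N$ and its complement, by a partition of unity. On $B_1$ the weights $e^{(n-2k)\gamma}$ are bounded above and below and the $\Phi_{j-l}$ are smooth, so this piece is controlled by interior estimates for $W^{\frac n2,2}$. On $\R^n\setminus\overline{B_1}$ I would apply the inversion $y=x/|x|^2$, which maps it diffeomorphically onto $\{0<|y|<1\}$, under which $\hat g=|y|^{2(\epsilon-1)}|dy|^2$ and, by a direct computation, $\int_{|x|>1}e^{(n-2j)\gamma}|\nabla^j u|^2\,dx=\int_{|y|<1}|y|^{(n-2j)(\epsilon-1)}\big|\nabla_y^j\hat u+(\text{lower-order terms of the same type})\big|^2\,dy$ with $\hat u(y)=u(y/|y|^2)$ --- precisely the integral appearing in the proof of Corollary \ref{cor:Suppose-that-,} with $\alpha=\epsilon$ (we may take $\epsilon\in(0,1]$), whose Hardy--Sobolev argument \eqref{eq:-14} absorbs the lower-order terms into the top one. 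Transplanting the same conical model to a conformally flat chart on $\S^n$ centred at $x_N$, where $g_1$ equals $|z|^{2(\epsilon-1)}$ times a smooth metric uniformly comparable to $g_S$, and combining Corollary \ref{cor:Suppose-that-,}, the elementary equivalence of $W^{\frac n2,2}(g_1)$ and $W^{\frac n2,2}(g_S)$ away from $x_N$, and the partition of unity, yields $W^{\frac n2,2}(\S^n,g_1)\simeq W^{\frac n2,2}(\S^n,g_S)$; the density of $C^\infty(\S^n)$ in $W^{\frac n2,2}(\S^n,g_1)$, needed to pass between completions, also follows from Corollary \ref{cor:Suppose-that-,} since $C_0^\infty(B_1(0))$ is dense in $W_0^{\frac n2,2}(B_1(0))$.

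For the third equivalence I would use Branson's factorization $P_{g_S}=\prod_k(-\Delta_{g_S}+c_k)$ with $c_k\ge0$ and kernel equal to the constants; elliptic theory on the closed manifold $\S^n$ then gives $\langle P_{g_S}v,v\rangle+\|v\|_{L^2(\S^n,g_S)}^2\simeq\|v\|_{W^{\frac n2,2}(\S^n,g_S)}^2$, so it remains to compare $\int_{\S^n}v^2\,dV_{g_1}$ with $\|v\|_{L^2(\S^n,g_S)}^2$ modulo $\langle P_{g_S}v,v\rangle$. Writing $g_1=e^{2\psi}g_S$, the density $e^{n\psi}$ is bounded away from $x_N$ and is comparable to $d(\cdot,x_N)^{n(\epsilon-1)}$ near $x_N$, hence belongs to $L^q(\S^n,g_S)$ for some $q>1$ (here $\epsilon>0$ is used), so H\"older's inequality and the Sobolev embedding $W^{\frac n2,2}(\S^n,g_S)\hookrightarrow L^p(\S^n,g_S)$ for every $p<\infty$ give $\int_{\S^n}v^2\,dV_{g_1}\le C\|v\|_{W^{\frac n2,2}(\S^n,g_S)}^2$. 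Conversely, decompose $v=\bar v+v^\perp$ with $\bar v=|\S^n|_{g_S}^{-1}\int_{\S^n}v\,dV_{g_S}$; the Poincar\'e inequality for $P_{g_S}$ gives $\|v^\perp\|_{L^2(\S^n,g_S)}^2\le C\langle P_{g_S}v,v\rangle$, whence also $\int_{\S^n}(v^\perp)^2\,dV_{g_1}\le C\|v^\perp\|_{W^{\frac n2,2}(\S^n,g_S)}^2\le C\langle P_{g_S}v,v\rangle$, and from $\bar v^2|\S^n|_{g_1}\le 2\int_{\S^n}v^2\,dV_{g_1}+2\int_{\S^n}(v^\perp)^2\,dV_{g_1}$ one obtains $\|v\|_{L^2(\S^n,g_S)}^2=\bar v^2|\S^n|_{g_S}+\|v^\perp\|_{L^2(\S^n,g_S)}^2\le C\big(\langle P_{g_S}v,v\rangle+\int_{\S^n}v^2\,dV_{g_1}\big)$. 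Combining the three equivalences with the two identities of the first paragraph gives $C^{-1}\|u\|_{\mathcal H}^2\le\|u\|_{\mathcal H'}^2\le C\|u\|_{\mathcal H}^2$ and, after passing to completions, the whole chain $W^{\frac n2,2}(\S^n,g_1)\simeq W^{\frac n2,2}(\S^n,g_S)\simeq(\mathcal H,\|\cdot\|_{\mathcal H})\simeq(\mathcal H,\|\cdot\|_{\mathcal H'})$.

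The step I expect to be the main obstacle is the absorption of the lower-order terms $\nabla^l u*\Phi_{j-l}$ in the first two equivalences. A pointwise estimate does not suffice: on $\{|x|>1\}$ the weight $|x|^{-2(j-l)}e^{(n-2j)\gamma}$ multiplying $|\nabla^l u|^2$ decays strictly more slowly than the natural weight $e^{(n-2l)\gamma}$ of $|\nabla^l u|^2$ in $\|u\|_{\mathcal H'}^2$ --- their ratio is $|x|^{2\epsilon(j-l)}\to\infty$ --- so one genuinely needs an integration by parts, i.e. the Hardy--Sobolev inequality \eqref{eq:-14}, and this is exactly why the argument is routed through Corollary \ref{cor:Suppose-that-,} and the inversion $y=x/|x|^2$. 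A secondary technical point is making rigorous the conformal-covariance identity $\int_{\R^n}|(-\Delta)^{n/4}u|^2\,dx=\int_{\S^n}vP_{g_S}v\,dV_{g_S}$, together with the integration by parts $\int_{\R^n}u(-\Delta)^{n/2}u\,dx=\int_{\R^n}|(-\Delta)^{n/4}u|^2\,dx$ (valid after subtracting the limit of $u$ at infinity), and the accompanying density arguments.
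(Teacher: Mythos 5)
Your proposal is correct, and its skeleton coincides with the paper's: reduce the weighted norm $\|\cdot\|_{\mathcal H'}$ to Sobolev norms on $\S^n$ via stereographic projection, handle the conical point $x_N$ by inversion and Corollary \ref{cor:Suppose-that-,} (Hardy--Sobolev absorbing the lower-order terms $\nabla^l u*\Phi_{j-l}$, exactly the point you flag) together with a partition of unity, and identify $\int_{\R^n}|(-\Delta)^{n/4}u|^2dx$ with $\int_{\S^n}uP_{g_S}u\,dV_{g_S}$ by conformal covariance. Where you genuinely diverge is the last step, the bound $\|u\|_{L^2(dV_{g_S})}^2\le C\|u\|_{\mathcal H}^2$: the paper proves a Poincar\'e-type inequality $\|u-\bar u\|_{L^2(dV_{g_S})}^2\le C\int_{\S^n}uP_{g_S}u\,dV_{g_S}$, with $\bar u$ the $g_1$-average, by a contradiction/compactness argument (weak convergence in $W^{\frac n2,2}$, Rellich in $L^2$, kernel of $P_{g_S}$ = constants), whereas you argue directly: split off the $g_S$-mean, use the spectral gap of $P_{g_S}$ (Branson's factorization) to control $v^\perp$ in $W^{\frac n2,2}(g_S)$, and recover the mean from $\int v^2\,dV_{g_1}$ using that the conical density lies in $L^q(g_S)$ for some $q>1$ plus the embedding $W^{\frac n2,2}\hookrightarrow L^p$. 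Your route is more quantitative (no compactness, constants traceable in principle) at the price of invoking the factorization/positivity of $P_{g_S}$ and Sobolev embeddings; the paper's is shorter but non-constructive. Two small caveats, both shared with the paper: Corollary \ref{cor:Suppose-that-,} is stated only for cone index in $(0,1]$, so the case $\epsilon>1$ (which does occur in Section \ref{section 3}) needs the easy separate remark that the weight is then bounded near the singular point; and the density of smooth functions used to pass between completions deserves the one line you already give.
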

	
	\begin{proof}
	We use $\sim$ to denote the equivalence of norms. By Corollary \ref{cor:Suppose-that-,} and a partition of unity argument,
		we have $\|u\|_{W^{\frac{n}{2},2}(\S^n,g_{S})}\sim\|u\|_{W^{\frac{n}{2},2}(\S^n,g_{1})}\sim\|u\|_{\mathcal{H}'}$. Thus,
    \[W^{\frac{n}{2},2}(\S^n,g_{1})\simeq W^{\frac{n}{2},2}(\S^n,g_{S})\simeq(\mathcal{H},\|\cdot\|_{\mathcal{H}'}).
		\]
 Clearly $\|u\|_{\mathcal{H}'}\geq \|u\|_{\mathcal{H}}$. It is well known (See \cite{BCY}, \cite{CY}) that 
		\begin{align*}
			\|u\|_{W^{\frac{n}{2},2}(\S^n,g_{S})}^{2} & \sim\int_{\S^n}uP_{g_{S}}udV_{g_{S}}+\|u\|_{L^{2}(dV_{g_{S}})}^{2}\\
			& \sim\int_{\mathbb{R}^{n}}|(-\Delta)^{\frac{n}{4}}u|^{2}dx+\|u\|_{L^{2}(dV_{g_{S}})}^{2}.\label{norm_eq}
		\end{align*}
So it remains to show that $\|u\|_{L^{2}(dV_{g_{S}})}^{2}\leq C\|u\|^2_{\mathcal{H}}$.		

We claim that there exists some constant $C$ such
that the following Poincar\'e type inequality holds
\begin{equation}
\|u-\bar{u}\|_{L^{2}(dV_{g_{S}})}^{2}\leq C\left(\int_{\S^n}uP_{g_{S}}udV_{g_{S}}\right),\label{eq:-17}
\end{equation}
where $\bar{u}=\text{vol}(\S^n,g_{1})^{-1}\int_{\S^n}udV_{g_{1}}$.
We argue by contradiction. Suppose there exists a sequence of $u_{i}\in\mathcal{H}$
such that 
\[
\int_{\S^n}u_{i}P_{g_{S}}u_{i}dV_{g_{S}}\to0,\ \int_{\S^n}u_{i}dV_{g_{1}}=0,\ \int_{\S^n}u_{i}^{2}dV_{g_{S}}=1.
\]
Then, as $u_{i}$ is abounded in $W^{2,\frac{n}{2}}(\S^n,g_S)$, there exists a weakly
convergent subsequence $u_{j}\rightharpoonup u_{\infty}$ and $u_{j}\to u_{\infty}$
strongly in $L^{2}$. Thus, 
\begin{align}
\int_{\S^n}u_{\infty}dV_{g_{1}} & =0,\label{eq:-15}\\
\int_{\S^n}u_{\infty}^{2}dV_{g_{S}} & =1,\label{eq:-166}
\end{align}
\begin{equation}
\int_{\S^n}|\sqrt{P_{g_{S}}}u_{\infty}|^{2}dV_{g_{S}}\leq\liminf_{j\to\infty}\int_{\S^n}u_{i}P_{g_{S}}u_{i}dV_{g_{S}}=0.\label{eq:-16}
\end{equation}
It must follow that $u_{\infty}\equiv c$. From (\ref{eq:-15}),
$u_{\infty}=0$ which contradicts to (\ref{eq:-166}). We have proved the claim.

Now, from (\ref{eq:-17}) we have 
$$\|u\|^2_{L^2(dV_{g_S})}\leq C(\bar u^2+\int_{\S^n}uP_{g_{S}}udV_{g_{S}})\leq C\|u\|^2_{\mathcal{H}}.$$ We have finished the proof.
 \end{proof}
	With Lemma \ref{lem:There-exists-a}, we will work on space $\mathcal{H}$
	with norm $\|\cdot\|_{\mathcal{H}}$. Other equivalent
	norms will be used under different circumstances.
	A function $u\in\mathcal{H}$ will be considered as a function on
	$\R^{n}$ or a function $\sigma^{*}u$ on $\S^n$.

\section{A weighted Moser-Trudinger-Admas  type inequality}\label{section 2}

 Morser-Trudinger-Adams inequalities are fundamental analytic devices in studies of Q-curvature. See, for instance, Branson-Chang-Yang \cite{BCY}, Chang-Yang \cite{CY}, Fontana \cite{Fontana}. A weighted version was first proved by Lam-Lu \cite{LamLu}, and
was used in Fang-Ma \cite{FaMa} to study constant Q-curvature metrics with conical singularities. In this section, we state and prove a weighted Morser-Trudinger-Adams type inequality that will be used in our main theorem. Such a weighted inequality is well known to experts in the field, but we have not found a precise one in the literature except in dimension 4. So, for the convenience of the readers, we take this opportunity to present a complete proof. 
 
	Let $\nabla_S$ be the standard Levi-Civita connection on round sphere $\S^n$, $\Delta_S$ be the Laplace-Beltrami operator on $\S^n$, and $P_{g_S}$ be the Panteiz operator. 
	Let $m=\left\lfloor \frac{n}{4}\right\rfloor $ and
	\[
	\square u:=\begin{cases}
		(-\Delta_{S})^{\frac{n}{4}}u, & n=4m,\\
		\nabla_{S}\left((-\Delta_{S})^{\frac{n-2}{4}}u\right), & n=4m+2.
	\end{cases}
	\]

The relevant  weighted Moser-Trudinger-Admas  type inequality states the following.
    \begin{thm}
		\label{thm:We-have}Let $u\in\mathcal{H}$. Let $\bar{u}=\frac{1}{\text{vol}(\S^n,g_{1})}\int_{\S^n}udV_{g_{1}}$. We have 
		\begin{equation}
			\log\int_{\S^n}e^{n|u-\bar{u}|}e^{n\gamma}dV_{S}\leq C+\frac{n}{2(n-1)!|\S^n|\min\{\epsilon,1\}}\int_{\S^n}|\square u|^{2}dV_{S}.\label{eq:-11}
		\end{equation}
	\end{thm}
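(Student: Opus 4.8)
The plan is to recast \eqref{eq:-11} as a weighted Adams inequality on the round sphere and then run the Adams--O'Neil representation scheme, with the exponent appearing in the sharp constant dictated by the volume growth of the weight. Write $d\mu := e^{n\gamma}\,dV_S = dV_{g_1}$; near the north pole $p_N$ (the cone point) one has $d\mu \sim d(\cdot,p_N)^{n(\epsilon-1)}\,dV_S$, and a short computation gives the uniform Ahlfors-type bound $\mu(B_r(p)) \le C\,r^{\,n\min\{\epsilon,1\}}$ for every geodesic ball $B_r(p)\subset\S^n$, the borderline exponent being attained only near $p_N$ and only when $\epsilon<1$. As a preliminary reduction, set $v := u - \overline u_{g_S}$, with $\overline u_{g_S}$ the \emph{round} mean; the difference $\overline u_{g_S} - \bar u$ between the round mean and the $g_1$-mean is bounded by $C\|u-\bar u\|_{L^2(dV_S)} \le C\|\square u\|_{L^2(dV_S)}$ by \eqref{eq:-17} and Lemma \ref{lem:There-exists-a}, so, absorbing this factor together with the $\log 2$ from $e^{n|v|}\le e^{nv}+e^{-nv}$, it suffices to bound $\log\int_{\S^n}e^{nv}\,d\mu$ for $v$ with $\int_{\S^n}v\,dV_S=0$.

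Next I would use the representation $v = (-\Delta_S)^{-n/4}\phi$, where $\phi := (-\Delta_S)^{n/4}v$ satisfies $\|\phi\|_{L^2(dV_S)}^2 = \int_{\S^n}v(-\Delta_S)^{n/2}v\,dV_S = \int_{\S^n}|\square v|^2\,dV_S =: A^2$. On mean-zero functions the Schwartz kernel of $(-\Delta_S)^{-n/4}$ has the same leading local singularity $c_n\,d(x,y)^{-n/2}$ as the Euclidean Riesz kernel $I_{n/2}$, plus a strictly less singular remainder; hence
\[
|v(x)| \le c_n\int_{\S^n}\frac{|\phi(y)|}{d(x,y)^{n/2}}\,dV_S(y) + (\text{lower-order terms}).
\]

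Now I would invoke the weighted Adams inequality of Lam--Lu type \cite{LamLu} (its dimension-$4$ case is the one used in \cite{FaMa}): for the potential $W(x) := c_n\int_{\S^n} d(x,y)^{-n/2}\,|\psi(y)|\,dV_S(y)$ of a density with $\|\psi\|_{L^2(dV_S)}\le 1$, one has $\int_{\S^n}e^{\alpha W^2}\,d\mu \le C(\alpha)$ for every $\alpha < \alpha^* := \min\{\epsilon,1\}\cdot\alpha_n$, where $\alpha_n$ is the classical sharp Adams constant for this kernel (so that $\tfrac{n^2}{4\alpha_n}=\tfrac{n}{2(n-1)!\,|\S^n|}$; cf.\ \cite{BCY,CY,Fontana} for the unweighted inequality). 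The sole effect of the weight is that the ``effective dimension'' at the concentration point $p_N$ drops from $n$ to $n\min\{\epsilon,1\}$, which is exactly the factor $\min\{\epsilon,1\}$ in $\alpha^*$ --- already visible from the truncated-fundamental-solution test function. Applying this with $\psi = \phi/A$ and the elementary inequality $nAW \le \alpha^*W^2 + \tfrac{n^2A^2}{4\alpha^*}$ gives $\log\int_{\S^n}e^{nv}\,d\mu \le C + \tfrac{n^2A^2}{4\alpha^*} = C + \tfrac{n}{2(n-1)!\,|\S^n|\min\{\epsilon,1\}}\int_{\S^n}|\square v|^2\,dV_S$; the remainder and mean-shift terms collected along the way, being controlled by Sobolev norms strictly weaker than $\|\square v\|_{L^2}$, are lower-order, and the precise constant is recovered by choosing $\alpha$ slightly below $\alpha^*$ together with a concentration-compactness argument ruling out a violating sequence with $\|\square v\|_{L^2}\to\infty$ (for a ``type'' inequality with a large $C$ this last refinement can even be skipped).

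The main obstacle is the weighted O'Neil--Adams estimate itself: one must control the distribution function, with respect to the singular measure $d\mu$, of the convolution of the Riesz kernel $d(\cdot,\cdot)^{-n/2}$ against an $L^2(dV_S)$ density, and verify that $d\mu$ alters the sharp exponential-integrability constant only through its volume-growth exponent $n\min\{\epsilon,1\}$. This is the step in which the conical nature of the weight enters essentially, and it is the reason the improvement factor is $1/\min\{\epsilon,1\}$ (equal to $1$ when $\epsilon\ge1$) rather than $1/\epsilon$ throughout. The remaining, more routine difficulty is the bookkeeping needed to absorb the lower-order error terms without perturbing the leading constant, carried out via the Poincar\'e inequality \eqref{eq:-17} and interpolation.
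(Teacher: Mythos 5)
Your overall strategy coincides with the paper's (Green's-function representation with the $d(x,y)^{-n/2}$ kernel, a weighted Adams inequality sensitive only to the cone exponent $\min\{\epsilon,1\}$, then Young's inequality), but as written it has a genuine gap at its center: the ``weighted O'Neil--Adams estimate'' for the potential of an $L^2(dV_S)$ density integrated against the singular measure $d\mu=dV_{g_1}$ on all of $\S^n$ is exactly what you would need to prove, and you leave it as ``the main obstacle'' rather than reducing it to anything in the literature. Lam--Lu's theorem (Theorem \ref{thm:Let-.-Let}) is a Euclidean statement for the power weight $|x|^{n\beta}$ on a bounded domain; it does not directly apply to $(\S^n,d\mu)$, and upgrading it to a general Ahlfors-type measure is not a cosmetic step. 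Moreover you invoke the weighted inequality only in its subcritical form ($\alpha<\alpha^*$). With that version, Young's inequality $nAW\leq\alpha W^2+\tfrac{n^2A^2}{4\alpha}$ yields \eqref{eq:-11} only with a constant strictly larger than $\tfrac{n}{2(n-1)!|\S^n|\min\{\epsilon,1\}}$ (and an additive constant blowing up as $\alpha\uparrow\alpha^*$); the proposed concentration-compactness patch does not recover the endpoint, since sharp-constant Adams inequalities with a uniform additive constant are genuine theorems (Adams, Fontana, Lam--Lu), not soft consequences of the subcritical case. The Lam--Lu result does hold at the endpoint $(1+\beta)b_n$, and you must use it there.

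The paper closes precisely this gap by localizing rather than working globally with $d\mu$: near the north pole one passes to (inverted) coordinates where, by the expansion \eqref{eq:-4}, $dV_{g_1}=|y|^{n(\epsilon-1)}|dy|^n$, so the Euclidean Lam--Lu inequality applies verbatim, at the sharp constant, to the truncated Riesz potential $u_\delta$; away from the pole $g_1$ is uniformly comparable to $g_S$ and Fontana's unweighted inequality \cite{Fontana} suffices; the two regions are then glued by a partition of unity. Your reduction of the mean and of the non-leading part of the kernel is essentially the paper's Lemma \ref{lem:Fix-.-Let} (there handled via an auxiliary polyharmonic function $\psi$ with $\|\psi\|_{L^2}$ or $\|\nabla_S\psi\|_{L^2}$ controlled), so that part of your outline is fine, though your bound $|\overline u_{g_S}-\bar u|\leq C\|u-\bar u\|_{L^2(dV_S)}$ needs an $L^p$--H\"older argument since $e^{n\gamma}\notin L^2(dV_S)$ when $\epsilon\leq\tfrac12$; also, for $n=4m+2$ you should either justify the pointwise expansion of the kernel of $(-\Delta_S)^{-n/4}$ (as the paper does for $\sqrt{P_{g_S}}$ in the proof of Corollary \ref{cor: weighted AMT ineq}) or, as in the paper, work with $\nabla_S G_{m+1}$ and Lemma \ref{lem:Notations-as-above.}. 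To repair your argument, replace the global weighted estimate by the near/far splitting above and quote Lam--Lu at the endpoint constant; as it stands, the proposal proves at best the inequality with an arbitrarily small loss in the leading coefficient.
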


For the main application to prescribing Q-curvature problem, we also need the following corollary.

	\begin{cor}\label{cor: weighted AMT ineq}
		There exists a constant $C=C(n,\gamma)$ such that 
		\begin{equation}
			\log\int_{\S^n}e^{n|u-\bar{u}|}dV_{g_{1}}\leq C+\frac{n}{2(n-1)!|\S^n|\min\{\epsilon,1\}}\int_{\S^n}uP_{g_{S}}udV_{S}.\label{eq:-12}
		\end{equation}
		On $\mathbb{R}^{n}$, 
		\begin{equation}
			\log\int_{\mathbb{R}^{n}}e^{n|u-\bar{u}|}e^{n\gamma}dx\leq C+\frac{n}{2(n-1)!|\S^n|\min\{\epsilon,1\}}\int_{\mathbb{R}^{n}}|(-\Delta)^{\frac{n}{4}}u|^{2}dx.\label{eq:-13}
		\end{equation}
	\end{cor}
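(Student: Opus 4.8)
The plan is to read off Corollary~\ref{cor: weighted AMT ineq} from Theorem~\ref{thm:We-have} in two elementary steps: first enlarge the Dirichlet term $\int_{\S^n}|\square u|^2\,dV_S$ to the Paneitz energy $\int_{\S^n}u\,P_{g_S}u\,dV_S$ to obtain \eqref{eq:-12}, and then transport the resulting inequality to $\R^n$ along the stereographic projection $\sigma$ to obtain \eqref{eq:-13}.

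For \eqref{eq:-12}: by the definition $g_1=\sigma^*(e^{2\gamma}|dx|^2)$, the weighted measure $e^{n\gamma}\,dV_S$ appearing in \eqref{eq:-11} is $dV_{g_1}$, and $\bar u$ is the same constant in both statements, so the left-hand sides of \eqref{eq:-11} and \eqref{eq:-12} agree. Since the coefficient of the Dirichlet term in \eqref{eq:-11} is positive, it then suffices to prove the quadratic-form comparison
\[
\int_{\S^n}|\square u|^2\,dV_S\ \le\ \int_{\S^n}u\,P_{g_S}u\,dV_S ,\qquad u\in\mathcal H .
\]
By the definition of $\square$ and integration by parts one has $\int_{\S^n}|\square u|^2\,dV_S=\int_{\S^n}u\,(-\Delta_S)^{n/2}u\,dV_S$, both when $n=4m$ and when $n=4m+2$. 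On the round sphere the critical GJMS (Paneitz-type) operator factors as $P_{g_S}=\prod_{k=1}^{n/2}\bigl(-\Delta_S+c_k\bigr)$ with constants $c_k\ge 0$ (indeed $c_{n/2}=0$); expanding the product and using $-\Delta_S\ge 0$ on $\mathcal H$, every term besides $(-\Delta_S)^{n/2}$ is a nonnegative operator, so $P_{g_S}\ge(-\Delta_S)^{n/2}$ as quadratic forms. This yields the comparison, and \eqref{eq:-12} follows from \eqref{eq:-11}.

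For \eqref{eq:-13}: under $\sigma$ the pushforward of $dV_{g_1}$ is $e^{n\gamma}\,dx$ and $\bar u$ is unchanged, so $\log\int_{\S^n}e^{n|u-\bar u|}\,dV_{g_1}=\log\int_{\R^n}e^{n|u-\bar u|}e^{n\gamma}\,dx$. By the conformal covariance of the critical order-$n$ GJMS operator — with $g_S=e^{2w_0}|dx|^2$ one has $P_{g_S}\phi=e^{-nw_0}(-\Delta)^{n/2}\phi$, hence $\int_{\S^n}u\,P_{g_S}u\,dV_S=\int_{\R^n}|(-\Delta)^{n/4}u|^2\,dx$, the identity already invoked in the proof of Lemma~\ref{lem:There-exists-a} — the right-hand side of \eqref{eq:-12} equals that of \eqref{eq:-13}. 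Substituting these two identities into \eqref{eq:-12} gives \eqref{eq:-13}.

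The one nontrivial ingredient is the comparison $\int_{\S^n}|\square u|^2\le\int_{\S^n}u\,P_{g_S}u$, which hinges on the explicit factorization of the spherical GJMS operator into shifted Laplacians with nonnegative shifts; everything else is bookkeeping for the conformal change of variables, using that $\bar u$ denotes the same $g_1$-average in all three inequalities and that $P_{g_S}$ and $(-\Delta)^{n/4}$ annihilate constants (so that the energy is unchanged if $u$ is replaced by $u-\bar u$).
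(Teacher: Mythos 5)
Your proposal is correct, but for the first inequality \eqref{eq:-12} it takes a genuinely different route from the paper. The paper does not compare quadratic forms: it reruns the whole Adams-type argument of Theorem \ref{thm:We-have} with $\square$ replaced by $\sqrt{P_{g_S}}$, using that the Green function of $\sqrt{P_{g_S}}$ has the same leading singularity $\frac{1}{(2\pi)^{n/2}}d(x,y)^{-n/2}$ as in Lemma \ref{lem:Notations-as-above.} (this is the Chang--Yang \cite[Lemma 1.6]{CY} device), so \eqref{eq:-12} is proved directly rather than deduced from \eqref{eq:-11}. You instead deduce \eqref{eq:-12} from \eqref{eq:-11} via the operator inequality $P_{g_S}\ge(-\Delta_S)^{n/2}$, which follows from Branson's factorization $P_{g_S}=\prod_{j=1}^{n/2}\bigl(-\Delta_S+(\tfrac n2+j-1)(\tfrac n2-j)\bigr)$ with nonnegative shifts (and the last shift zero), together with $\int_{\S^n}|\square u|^2\,dV_S=\int_{\S^n}u(-\Delta_S)^{n/2}u\,dV_S$; since the coefficient in \eqref{eq:-11} is positive and the comparison extends to $\mathcal H\simeq W^{\frac n2,2}(\S^n,g_S)$ by density, this yields \eqref{eq:-12} with the same constant. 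Your route is more elementary (no Green-function expansion for $\sqrt{P_{g_S}}$, no repetition of the Adams argument) but leans on the sphere-specific product formula for the critical GJMS operator, whereas the paper's route is the standard, more robust one that would survive on settings where such a factorization with nonnegative shifts is unavailable. For \eqref{eq:-13} you argue exactly as the paper does, via conformal covariance of the critical Paneitz/GJMS operator and the change of variables $\sigma^*(e^{n\gamma}dx)=dV_{g_1}$; your explicit identification of the measure ``$e^{n\gamma}dV_S$'' in \eqref{eq:-11} with $dV_{g_1}$ is the same reading the paper itself uses, since its proof of Theorem \ref{thm:We-have} actually establishes the bound \eqref{eq:-10} with $dV_{g_1}$.
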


The rest of the section is devoted to prove Theorem \ref{thm:We-have}. We first state a few technical lemmas.

	Let $G_{k}(x,y)$ be the Green function of $(-\Delta_{S})^{k}$ on
	$\S^n$. Then we have 
	\[
	u(x)-u_{S}=\begin{cases}
		\int_{\S^n}G_{m}(x,y)\square u(y)dV_{S}, & n=4m,\\
		\int_{\S^n}\nabla_{S}G_{m+1}(x,y)\cdot\square u(y)dV_{S}, & n=4m+2.
	\end{cases}
	\]
Here $u_{S}=|\S^n|^{-1}
\int_{\S^n}udV_S$.  By \cite[Lemma 2.6, 2.7]{Fontana}, we have the following expansion for Green function:
	\begin{lem}
		\label{lem:Notations-as-above.}Notations as above. Let $d(\cdot,\cdot)$
		be the distance function on $\S^n$ with $g_{S}$.
		\begin{enumerate}
			\item For $k={1,...,\frac{n}{2}-1}$,
			\[
			G_{k}(x,y)=-c_{m}\chi d(x,y)^{2k-n}(1+O(d(x,y)^{\frac{1}{2}}))+R(x,y),
			\]
			where $c_{k}=\frac{\Gamma(\frac{n}{2}-k)}{2^{2k-1}\Gamma(\frac{n}{2})\Gamma(k)|S^{n-1}|}$,
			$R$ is a smooth function on $\S^n\times \S^n$, and $\chi(x,y)$
			equals $1$ in a neighborhood of the diagonal of $\S^n\times \S^n$
			and equals $0$ when $d(x,y)>\frac{1}{4}$. 
			\item If $n=4m+2$, then
			\begin{equation}
				|\nabla_{S}G_{m+1}(x,y)|\leq c_{m}'\chi d(x,y))^{-\frac{n}{2}}(1+O(d(x,y)^{\frac{1}{2}}))+R_{1}(x,y),\label{eq:-2}
			\end{equation}
			where $c_{m}'=\frac{\Gamma(\frac{n}{2}-m)}{2^{2m}\Gamma(\frac{n}{2})\Gamma(m+1)|S^{n-1}|}$,
			$R_{1}$ is a smooth function on $\S^n\times \S^n$.
		\end{enumerate}
	\end{lem}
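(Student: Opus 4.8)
The plan is to read off the expansion from a parametrix for $(-\Delta_{S})^{k}$ modeled on the flat polyharmonic fundamental solution, so that the only genuinely computational input is an elementary Euclidean fact. First I would recall that for $1\le k\le \tfrac n2-1$ the fundamental solution of $(-\Delta)^{k}$ on $\R^{n}$ is $c_{k}|x|^{2k-n}$, with $c_{k}=\frac{\Gamma(\frac n2-k)}{2^{2k-1}\Gamma(\frac n2)\Gamma(k)|\S^{n-1}|}$; this follows by induction on $k$ from $(-\Delta)|x|^{2-n}=(n-2)|\S^{n-1}|\delta_{0}$ together with $\Delta|x|^{a}=a(a+n-2)|x|^{a-2}$, which yields the recursion $c_{k}=\frac{c_{k-1}}{(n-2k)(2k-2)}$, or alternatively from the normalization of the Riesz potential. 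Fixing $y\in\S^{n}$ and passing to geodesic normal coordinates centered at $y$, one has $d(x,y)=|x|$, $g_{ij}=\delta_{ij}+O(|x|^{2})$, $\sqrt{\det g}=1+O(|x|^{2})$, and Christoffel symbols of size $O(|x|)$; consequently $(-\Delta_{S})^{k}-(-\Delta)^{k}$, applied to any function behaving like $|x|^{a}$ near $y$, produces a function two orders less singular than $(-\Delta)^{k}|x|^{a}$. I would then take $P^{(0)}(x,y):=-c_{k}\,\chi(x,y)\,d(x,y)^{2k-n}$ as the leading parametrix, with $\chi$ the cutoff in the statement. Since $(-\Delta)^{k}|x|^{2k-n}\equiv 0$ for $x\ne y$, this gives $(-\Delta_{S,x})^{k}P^{(0)}(\cdot,y)=\delta_{y}+E_{0}(\cdot,y)$, where $E_{0}$ is smooth on the support of $\nabla\chi$ and, near the diagonal, is $O(d^{\,2k-n+2})$ (with a possible $\log d$ when $2k=n-2$), hence genuinely milder than the Dirac mass and admissible as an error to feed back into the scheme.

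The next step is to correct $P^{(0)}$. Writing $G_{k}=P^{(0)}+R'$ modulo constants, $R'$ solves $(-\Delta_{S})^{k}R'=-E_{0}$, and the near-diagonal mapping properties of $(-\Delta_{S})^{-k}$ — again computed in normal coordinates — show that $R'$ is no more singular than $O(d^{\,2k-n+2})$, i.e. $O(d^{1/2})$ relative to $P^{(0)}$, up to a possible logarithm. Peeling off the explicit next-order kernel, which has the form $(\text{smooth})\cdot d^{\,2k-n+2j}$, and iterating finitely many times, the outstanding error becomes $C^{N'}$ with $N'\to\infty$; an elliptic bootstrap for $(-\Delta_{S,x})^{k}$, uniform in the parameter $y$ and symmetrized via $G_{k}(x,y)=G_{k}(y,x)$, then produces a remainder $R$ that is $C^{\infty}$ on $\S^{n}\times\S^{n}$. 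Collecting the corrections, all of relative order $d^{2}$ or higher near the diagonal, into the deliberately crude factor $(1+O(d^{1/2}))$ gives part (1) with the constant $c_{k}$ above. Equivalently, one may phrase the whole argument through the pseudodifferential calculus: $(-\Delta_{S})^{-k}$ on the orthogonal complement of the constants is a classical $\Psi$DO of order $-2k$, its on-diagonal kernel asymptotics are standard, and its leading coefficient is fixed by the principal symbol $|\xi|_{g}^{2k}$, hence equals the flat $c_{k}$.

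For part (2), take $n=4m+2$ and $k=m+1$, so that $2k-n=1-\tfrac n2<0$ and part (1) reads $G_{m+1}(x,y)=-c_{m+1}\,\chi\,d(x,y)^{1-n/2}(1+O(d^{1/2}))+R(x,y)$ with $R$ smooth. Differentiating in $x$ (equivalently in $y$, by symmetry), $\nabla_{S}(\chi\,d^{2k-n})=(2k-n)\,\chi\,d^{2k-n-1}\nabla_{S}d+(\nabla_{S}\chi)\,d^{2k-n}$; since $|\nabla_{S}d|=1$, the first term has magnitude $|2k-n|\,\chi\,d^{2k-n-1}=(n-2m-2)\,\chi\,d^{-n/2}$ and dominates near the diagonal, the second term is smooth because its support avoids the diagonal, the derivative of the $O(d^{1/2})$ factor contributes $O(d^{-n/2+1/2})$, and $\nabla_{S}R$ is smooth. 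This yields $|\nabla_{S}G_{m+1}(x,y)|\le c_{m}'\,\chi\,d(x,y)^{-n/2}(1+O(d^{1/2}))+R_{1}(x,y)$ with $c_{m}'=(n-2m-2)c_{m+1}$, and a direct manipulation of Gamma functions, using $(n-2m-2)\Gamma(\tfrac n2-m-1)=2\Gamma(\tfrac n2-m)$, shows $c_{m}'=\frac{\Gamma(\frac n2-m)}{2^{2m}\Gamma(\frac n2)\Gamma(m+1)|\S^{n-1}|}$, matching the stated constant.

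The main obstacle is of two kinds, both routine in principle. One is pinning down the constants $c_{k}$ and $c_{m}'$ exactly; this reduces entirely to the correct normalization of the flat polyharmonic fundamental solution and survives the normal-coordinate transfer because only the principal part of $(-\Delta_{S})^{k}$ enters the leading singularity. The other is the bookkeeping that upgrades ``finitely many explicit parametrix corrections plus a controlled error'' to ``a $C^{\infty}$ remainder'': this is the standard but somewhat tedious interplay between the near-diagonal mapping properties of $(-\Delta_{S})^{-k}$ and interior elliptic regularity on the product manifold, complicated slightly by the fact that $(-\Delta_{S})^{k}$ has the constants as kernel, so that everything is carried out modulo constants (with averages subtracted), which affects only the smooth pieces $R$ and $R_{1}$.
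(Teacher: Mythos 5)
Your proof is essentially correct, but note that the paper does not prove this lemma at all: it is quoted directly from Fontana (Lemmas 2.6 and 2.7 of \cite{Fontana}), so there is no in-paper argument to compare against. What you have written is the standard parametrix reconstruction of Fontana-type Green function expansions, and the quantitative content checks out: the normalization $c_k=\frac{\Gamma(\frac n2-k)}{2^{2k-1}\Gamma(\frac n2)\Gamma(k)|\S^{n-1}|}$ agrees with the Riesz-potential constant $\frac{\Gamma(\frac n2-k)}{4^{k}\pi^{n/2}\Gamma(k)}$ after substituting $|\S^{n-1}|=2\pi^{n/2}/\Gamma(\frac n2)$, your recursion $c_k=c_{k-1}/((n-2k)(2k-2))$ is consistent with it, and for part (2) the identity $(n-2m-2)\Gamma(\frac n2-m-1)=2\Gamma(\frac n2-m)$ does give $|2k-n|\,c_{m+1}=c_m'$ with $n=4m+2$, $k=m+1$. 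The structure of the argument (flat polyharmonic fundamental solution in normal coordinates as leading parametrix, iteration of the near-diagonal mapping properties of $(-\Delta_S)^{-k}$ modulo constants, elliptic bootstrap for the smooth remainder) is exactly the route one would take, and it correctly explains why only the principal symbol fixes the leading constant.

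Three small points deserve care. First, your claim that the initial error $E_0=(-\Delta_{S})^{k}P^{(0)}-\delta_y$ is $O(d^{2k-n+2})$ is too optimistic: the words in $((-\Delta)+L)^{k}$ containing a single lower-order factor $L$ contribute $O(d^{2-n})$ near the diagonal, which for $k\geq 2$ is more singular than $d^{2k-n+2}$. This is harmless, since $d^{2-n}$ is still integrable and $(-\Delta_S)^{-k}$ maps it to $O(d^{2k-n+2})$, which is precisely the bound you then assert for the correction $R'$; but the intermediate order should be fixed. Second, there is a sign slip: with $P^{(0)}=-c_k\chi d^{2k-n}$ one gets $(-\Delta_{S,x})^{k}P^{(0)}=-\delta_y+E_0$, not $+\delta_y+E_0$; either the minus sign in the parametrix should be dropped or the leading term of $G_k$ comes out with the opposite sign to the one displayed (the paper's own sign convention here appears to be inherited from Fontana, and only $|G_k|$ and $|\nabla_S G_{m+1}|$ are used downstream, so nothing breaks). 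Third, in part (2) you differentiate the $O(d^{1/2})$ factor; this is legitimate only because the parametrix construction produces an expansion whose terms are explicit kernels of the form $(\text{smooth})\cdot d^{\text{power}}$ (possibly with logarithms), which can be differentiated term by term --- one cannot differentiate a bare $O(d^{1/2})$ bound, so it is worth saying explicitly that the derivative estimate is read off from the expansion rather than from the remainder bound.
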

	
	\begin{rem}
		If $n=4m$, then $c_{m}=\frac{1}{2^{n/2-1}\Gamma(n/2)|S^{n-1}|}=\frac{1}{(2\pi)^{\frac{n}{2}}}$.
		If $n=4m+2$, we have $c_{m}'=\frac{1}{(2\pi)^{\frac{n}{2}}}.$
	\end{rem}

    From Green functions and their expansions, we can solve poly-harmonic equations and obtain standard $L^p$ estimates which can be summarized in the following Lemma.
	\begin{lem}
		\label{lem:Let-.-If}Let $q>1$. If $n=4m$, for $f\in L^{q}(\S^n,dV_{S})$,
		there exists a function $g$ in $W^{2m,q}(\S^n,g_{S})\cap L^{2}(\S^n,dV_{S})$
		such that 
		\[
		(-\Delta_{S})^{m}g=f,
		\]
		and $\|g\|_{L^{2}}\leq c\|f\|_{L^{q}}$ for some positive constant $c=c(n,q)$.
		If $n=4m+2$, for $f\in L^{q}(\S^n,dV_{S})$, there exists a function
		$g$ in $W^{2(m+1),q}(\S^n,dV_{S})$ such that 
		\[
		(-\Delta_{S})^{m+1}g=f,
		\]
		and $\|\nabla_{S}g\|_{L^{2}}\leq c'\|f\|_{L^{q}}$ for some positive constant
		$c'(n,q)$.
	\end{lem}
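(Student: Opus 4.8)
\textbf{Proof plan for Lemma \ref{lem:Let-.-If}.}
The strategy is the standard one: invert the polyharmonic operator against its Green function and read off the needed estimate from the singularity structure recorded in Lemma \ref{lem:Notations-as-above.}. Since $(-\Delta_S)^m$ (resp.\ $(-\Delta_S)^{m+1}$) annihilates the constants and is self-adjoint, I first replace $f$ by $f - f_S$ so that it has zero mean, which is exactly the solvability condition; the general case differs only by adding a constant, which changes nothing in the stated norms because $\|\nabla_S(\cdot)\|_{L^2}$ and the $L^2$-norm modulo constants are unaffected (and in the $n=4m$ case one normalizes $g$ to have zero mean as well). So assume $\int_{\S^n} f\, dV_S = 0$ and set
\[
g(x) := \int_{\S^n} G_m(x,y)\, f(y)\, dV_S(y) \quad (n=4m), \qquad
g(x) := \int_{\S^n} G_{m+1}(x,y)\, f(y)\, dV_S(y) \quad (n=4m+2).
\]
Then $(-\Delta_S)^m g = f$ (resp.\ $(-\Delta_S)^{m+1} g = f$) by definition of the Green function.

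For the $n=4m$ case: by part (1) of Lemma \ref{lem:Notations-as-above.}, $G_m(x,y)$ is, up to a smooth bounded remainder $R(x,y)$, a constant multiple of $\chi\, d(x,y)^{2m-n}(1+O(d(x,y)^{1/2}))$, and since $2m - n = -n/2 < 0$, the kernel $|G_m(x,y)| \le C\, d(x,y)^{-n/2} + C$. The operator $f \mapsto \int d(x,y)^{-n/2} f(y)\, dV_S(y)$ is a Riesz-type potential of order $n/2$ on the compact $n$-manifold $\S^n$; by the standard Hardy--Littlewood--Sobolev / fractional integration estimate on manifolds it maps $L^q(\S^n)$ continuously into $L^s(\S^n)$ for any $s$ with $\tfrac1s > \tfrac1q - \tfrac12$ (and into $L^\infty$ or BMO when $\tfrac1q \le \tfrac12$), in particular into $L^2(\S^n)$: when $q \le 2$ this needs $\tfrac1q - \tfrac12 < \tfrac12$, i.e.\ $q>1$, which is precisely our hypothesis, and when $q>2$ the kernel is already in $L^{q'}$ and H\"older gives an $L^\infty$ bound hence an $L^2$ bound. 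The smooth remainder $R$ contributes a bounded term controlled by $\|f\|_{L^1} \le C\|f\|_{L^q}$. This yields $\|g\|_{L^2} \le c\|f\|_{L^q}$. The membership $g \in W^{2m,q}(\S^n,g_S)$ is then just elliptic (Calder\'on--Zygmund) regularity for $(-\Delta_S)^m g = f$ with $f \in L^q$.

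For the $n=4m+2$ case: here $2(m+1) - n = -n/2 + 1$, so $G_{m+1}(x,y)$ itself is only mildly singular (or bounded, depending on parity of $m$), but what we need to control is $\nabla_S g$. Differentiating under the integral sign gives $\nabla_S g(x) = \int_{\S^n} \nabla_{S,x} G_{m+1}(x,y)\, f(y)\, dV_S(y)$, and part (2) of Lemma \ref{lem:Notations-as-above.} gives exactly $|\nabla_S G_{m+1}(x,y)| \le C\, d(x,y)^{-n/2} + C$. This is the same kernel bound as in the previous case, so the identical Hardy--Littlewood--Sobolev argument gives $\|\nabla_S g\|_{L^2} \le c'\|f\|_{L^q}$, and $g \in W^{2(m+1),q}$ again follows from elliptic regularity.

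The only genuinely delicate point is the fractional-integration estimate: I need the mapping property of the kernel $d(x,y)^{-n/2}$ from $L^q$ into $L^2$ on $\S^n$, uniformly, at the borderline exponent. This is classical (it is the compact-manifold version of the Hardy--Littlewood--Sobolev inequality, obtained by covering $\S^n$ with normal coordinate charts and reducing to the Euclidean Riesz potential $I_{n/2}$, whose $L^q \to L^s$ boundedness for $\tfrac1s = \tfrac1q - \tfrac12 > 0$ is standard and, for $q>2$, replaced by a direct H\"older estimate since then $d(\cdot,y)^{-n/2} \in L^{q'}(\S^n)$). So the substance of the proof is bookkeeping with Lemma \ref{lem:Notations-as-above.} plus one invocation of fractional integration and one of elliptic regularity; no new ideas are required.
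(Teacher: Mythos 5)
Your argument is correct in substance and is exactly the route the paper has in mind: the paper gives no written proof of this lemma (it is dismissed as "standard $L^p$ estimates" obtained from the Green function expansions of Lemma \ref{lem:Notations-as-above.}), and your proof --- represent $g$ by the Green kernel, bound $|G_m|$ resp.\ $|\nabla_S G_{m+1}|$ by $C\,d(x,y)^{-n/2}+C$, and apply (non-sharp) fractional integration/Young on the compact manifold together with elliptic $L^q$ regularity for the $W^{2m,q}$ (resp.\ $W^{2(m+1),q}$) membership --- is the standard way to fill that in; the exponent bookkeeping ($q>1$ gives an $L^q\to L^2$ bound for the kernel $d^{-n/2}$) is right.

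One sentence of yours is wrong and should be fixed, though it does not affect the use of the lemma in the paper. Since $\int_{\S^n}(-\Delta_S)^k g\,dV_S=0$ for any admissible $g$, the equation $(-\Delta_S)^m g=f$ is \emph{unsolvable} when $\int_{\S^n}f\,dV_S\neq 0$; your claim that "the general case differs only by adding a constant" cannot rescue it, because adding a constant to $g$ does not change $(-\Delta_S)^m g$. What your Green-function formula actually produces is a solution of $(-\Delta_S)^k g=f-f_S$, which coincides with the stated equation precisely when $f$ has zero mean. So the lemma should be read (and proved) with the implicit mean-zero hypothesis --- which is harmless here, since in the paper it is applied only to $\rho=e^{n\gamma}/\mathrm{vol}(\S^n,g_1)-|\S^n|^{-1}$, which satisfies $\int_{\S^n}\rho\,dV_S=0$. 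State that hypothesis (or replace the conclusion by $(-\Delta_S)^k g=f-f_S$) instead of asserting the general case.
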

	
		
	
	We also need the following estimate.
	\begin{lem}
		\label{lem:Fix-.-Let}Fix $x\in \S^n$. Let $B_{\delta}(x)$ be a
		geodesic ball of radius $\delta<\frac{\pi}{2}$ centered at $x$.
		Let $r=d_{\S^n}(x,y)$. Let $u\in\mathcal{H}$. We have 
		\begin{equation}
			|u(x)-\bar{u}|\leq\frac{1}{(2\pi)^{\frac{n}{2}}}\int_{B_{\delta}(x)}r^{-\frac{n}{2}}|\square u(y)|dV_{S}+C\|\square u\|_{L^{2}(dV_{S})}.\label{eq:-3}
		\end{equation}
		Here $C$ is a constant depending only on $n,\gamma,\epsilon$.
	\end{lem}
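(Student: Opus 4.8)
The plan is to combine the Green-function representation of $u-u_S$ recorded above with the expansions in Lemma~\ref{lem:Notations-as-above.}, isolate the singular leading term over the ball $B_\delta(x)$, and then pass from the $g_S$-mean $u_S$ to the $g_1$-mean $\bar u$. Treating $n=4m$ and $n=4m+2$ in parallel and taking absolute values inside the representation integral, one has in both cases
\[
|u(x)-u_S|\leq\int_{\S^n}\mathcal K(x,y)\,|\square u(y)|\,dV_S,
\]
where $\mathcal K=|G_m|$ if $n=4m$ and $\mathcal K=|\nabla_S G_{m+1}|$ if $n=4m+2$. Absorbing the $O(d^{1/2})$ correction from Lemma~\ref{lem:Notations-as-above.} into a lower-order term, and noting (from the Remark following that lemma) that the leading Green-function coefficient equals $(2\pi)^{-n/2}$ in both cases, one obtains
\[
\mathcal K(x,y)\leq\frac{1}{(2\pi)^{n/2}}\,\chi(x,y)\,d(x,y)^{-n/2}+C\,\chi(x,y)\,d(x,y)^{-(n-1)/2}+|S(x,y)|,
\]
with $\chi$ the diagonal cut-off (supported in $\{d\leq 1/4\}$) and $S$ (equal to $R$ or $R_1$) smooth on $\S^n\times\S^n$.

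Next I would estimate the three contributions to $\int_{\S^n}\mathcal K\,|\square u|\,dV_S$ separately. Since $u\in\mathcal H$ gives $\square u\in L^2(dV_S)$ and $|\S^n|<\infty$, we have $\|\square u\|_{L^1(dV_S)}\leq|\S^n|^{1/2}\|\square u\|_{L^2(dV_S)}$, so the smooth-kernel term is $\leq\|S\|_{L^\infty}\|\square u\|_{L^1(dV_S)}\leq C\|\square u\|_{L^2(dV_S)}$. For the $d^{-(n-1)/2}$ term, Cauchy--Schwarz bounds it by $C\,\|\chi\,d^{-(n-1)/2}\|_{L^2(dV_S)}\|\square u\|_{L^2(dV_S)}\leq C\|\square u\|_{L^2(dV_S)}$, because $\chi\,d^{-(n-1)/2}$ is supported in $\{d\leq 1/4\}$ and $\int_{\{d\leq 1/4\}}d^{-(n-1)}\,dV_S\sim\int_0^{1/4}\rho^{-(n-1)}\rho^{n-1}\,d\rho<\infty$. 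For the $d^{-n/2}$ term I would split $\S^n=B_\delta(x)\cup(\S^n\setminus B_\delta(x))$: on the complement $\chi\,d^{-n/2}$ is bounded (it vanishes where $d>1/4$ and is $\leq\delta^{-n/2}$ where $\delta\leq d\leq1/4$), which gives $\leq C\|\square u\|_{L^2(dV_S)}$, while over $B_\delta(x)$, using $\chi\leq1$ leaves exactly $\frac{1}{(2\pi)^{n/2}}\int_{B_\delta(x)}r^{-n/2}|\square u|\,dV_S$. Collecting these pieces gives $|u(x)-u_S|\leq\frac{1}{(2\pi)^{n/2}}\int_{B_\delta(x)}r^{-n/2}|\square u|\,dV_S+C\|\square u\|_{L^2(dV_S)}$.

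Finally I would replace $u_S$ by $\bar u$. Since $u_S-\bar u$ is constant, $|u_S-\bar u|=|\S^n|^{-1}\bigl|\int_{\S^n}(u-\bar u)\,dV_S\bigr|\leq|\S^n|^{-1/2}\|u-\bar u\|_{L^2(dV_S)}$, and the Poincar\'e inequality \eqref{eq:-17}, together with the identity $\int_{\S^n}uP_{g_S}u\,dV_S=\int_{\S^n}(u-u_S)P_{g_S}(u-u_S)\,dV_S$ and the standard bound $\int_{\S^n}(u-u_S)P_{g_S}(u-u_S)\,dV_S\leq C\|\square u\|_{L^2(dV_S)}^2$ for zero-$g_S$-mean functions (using that $P_{g_S}$ is nonnegative, self-adjoint, and annihilates constants), yields $|u_S-\bar u|\leq C\|\square u\|_{L^2(dV_S)}$. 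Adding this to the previous estimate proves \eqref{eq:-3}. The only real difficulty is bookkeeping: one must absorb each remainder — the smooth kernel, the $O(d^{1/2})$ correction, the far-from-diagonal piece, and the $u_S\to\bar u$ adjustment — into $C\|\square u\|_{L^2(dV_S)}$ while keeping the leading constant \emph{exactly} $(2\pi)^{-n/2}$; this is why the explicit values of $c_m,c_m'$ in Lemma~\ref{lem:Notations-as-above.} and the borderline integrability $d^{-(n-1)/2}\in L^2_{\mathrm{loc}}(\S^n)$ are both essential, and why the two parities $n=4m$, $n=4m+2$ must be carried through uniformly.
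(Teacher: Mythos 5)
Your proof is correct, and its core coincides with the paper's: both use the Green-function representation of $u-u_S$, the expansions of Lemma \ref{lem:Notations-as-above.} with leading constant $(2\pi)^{-n/2}$, absorb the $O(d^{1/2})$ correction through $\chi\, d^{-(n-1)/2}\in L^2(dV_S)$, and split the singular kernel into the exact term over $B_\delta(x)$ plus remainders controlled by $\|\square u\|_{L^2(dV_S)}$. Where you genuinely differ is the passage from the round mean $u_S$ to the weighted mean $\bar u$. The paper sets $\rho=e^{n\gamma}/\mathrm{vol}(\S^n,g_1)-|\S^n|^{-1}$, solves $(-\Delta_S)^m\psi=\rho$ (resp. $(-\Delta_S)^{m+1}\psi=\rho$) via Lemma \ref{lem:Let-.-If} — this is precisely where the $L^q$-integrability of the conical weight, hence the $\epsilon$-dependence of $C$, enters — and gets the exact identity $u(x)-\bar u=\int_{\S^n}(G-\psi)\,\square u\,dV_S$ (resp. with gradients), so the mean correction is handled by a single Cauchy--Schwarz against $\psi$. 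You instead bound $|u_S-\bar u|$ separately using the Poincar\'e inequality \eqref{eq:-17} from Lemma \ref{lem:There-exists-a} together with $\int_{\S^n}uP_{g_S}u\,dV_S\le C\|\square u\|_{L^2}^2$; this reuses earlier material and avoids the auxiliary potential $\psi$ altogether, which is a legitimate and somewhat more economical route. Two small caveats: the bound on the Paneitz form needs slightly more justification than ``nonnegative, self-adjoint, annihilates constants'' — either invoke the spectral comparison on $\S^n$ (eigenvalue $k(k+1)\cdots(k+n-1)$ of $P_{g_S}$ versus $\bigl(k(k+n-1)\bigr)^{n/2}$ for $\|\square u\|_{L^2}^2$) or the standard equivalences quoted from \cite{BCY,CY} combined with a Poincar\'e inequality for $\square$; and, as in the paper's own argument, the portion of the singular kernel with $\delta\le d(x,y)\le\frac14$ makes the constant depend on $\delta$ as well, which is harmless since $\delta$ is later fixed by the geometry alone.
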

	
	\begin{proof}
		Let 
		\[
		\rho=\frac{e^{n\gamma}}{\text{vol}(\S^n,g_{1})}-\frac{1}{|\S^n|}.
		\]
		Then $\int_{\S^n}\rho dV_{S}=0$ and $\rho\in L^{q}(\S^n,dV_{S})$
		for $1<q<\frac{1}{1-\epsilon}$ if $\epsilon<1$ and $\rho\in L^{\infty}(\S^n)$
		if $\epsilon\geq1$. Let $\psi$ be the solution to 
		\[
		\begin{cases}
			(-\Delta_{S})^{m}\psi=\rho, & \mathrm{if }\ n=4m,\\
			(-\Delta_{S})^{m+1}\psi=\rho, & \mathrm{if }\ n=4m+2.
		\end{cases}
		\]
		Then $\|\psi\|_{L^{2}(dV_{S})}$ is bounded by $C(n,\gamma,\epsilon)$
		when $n=4m$. $\|\nabla_{S}\psi\|_{L^{2}(dV_{S})}$ is bounded by
		$C(n,\gamma,\epsilon)$ when $n=4m+2$. We have 
		\[
		u(x)-\overline{u}=\begin{cases}
			\int_{\S^n}\left(G_{m}(x,y)-\psi(y)\right)\square u(y)dV_{S}, & n=4m,\\
			\int_{\S^n}\left(\nabla_{S}G_{m+1}(x,y)-\nabla_{S}\psi(y)\right)\cdot\square u(y)dV_{S}, & n=4m+2.
		\end{cases}
		\]
		If $n=4m$, by Lemma \ref{lem:Let-.-If}, 
		\[
		\left|\int_{\S^n}\psi(y)\square u(y)dV_{S}\right|\leq\|\psi\|_{L^{2}(dV_{S})}\|\square u\|_{L^{2}(dV_{S})}.
		\]
		If $n=4m+2$, by Lemma \ref{lem:Let-.-If}, 
		\[
		\left|\int_{\S^n}\nabla_{S}\psi(y)\cdot\square u(y)dV_{S}\right|\leq\|\nabla_{S}\psi\|_{L^{2}(dV_{S})}\|\square u\|_{L^{2}(dV_{S})}.
		\]
		Then (\ref{eq:-3}) follows from the expansion of $G_{m}$ and $\nabla G_{m+1}$
		in Lemma \ref{lem:Notations-as-above.}. 
	\end{proof}
	Now, we state a weighted Moser-Trudinger-Adams inequality proved by 
	Lam-Lu \cite{LamLu}. 
	\begin{thm}[Lam-Lu]
		\label{thm:Let-.-Let}Let $\beta\in(-1,0]$. Let $\Omega$ be a open
		set in $\mathbb{R}^{n}$ with finite lebesgue measure. Then, there
		exists a constant $c_{0}=c_{0}(p,\Omega)$ such that for all $f\in L^{2}(\mathbb{R}^{n})$
		supported in $\Omega$,
		\[
		\int_{\Omega}\exp\left((1+\beta)b_{n}\frac{\left|I_{\frac{n}{2}}*f(x)\right|^{2}}{\|f\|_{L^{2}}^{2}}\right)|x|^{n\beta}dx\leq c_{0},
		\]
		where $I_{\frac{n}{2}}*f(x)=\frac{1}{(2\pi)^{\frac{n}{2}}}\int_{\Omega}|x-y|^{-\frac{n}{2}}f(y)dy,$
		and $b_{n}=\frac{n(2\pi)^{n}}{|S^{n-1}|}$.
	\end{thm}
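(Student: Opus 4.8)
\emph{Strategy.} The plan is to run D.~R.~Adams' proof of the sharp Moser--Trudinger--Adams inequality, in the form valid \emph{at} the critical constant, adapting each step to the power weight $|x|^{n\beta}$. Put $d\mu=|x|^{n\beta}\mathbf 1_\Omega\,dx$. Since $\beta\in(-1,0]$ the weight is locally integrable, so $\mu$ is a finite measure, and the only property of $\mu$ used is the doubling-at-the-origin bound: there is $\kappa=\kappa(n,\beta)>0$ with $\mu(B_r(x_0))\le\kappa r^{n(1+\beta)}$ for all $x_0\in\R^n,\ r>0$, the worst case being $x_0=0$; equivalently, every measurable $E$ with $\mu(E)=t$ has Lebesgue measure $|E|\ge(t/\kappa)^{1/(1+\beta)}$. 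As reductions: since $c_0$ may depend on $\Omega$ and enlarging $\Omega$ while keeping $f$ supported in the original set leaves $\|f\|_{L^2}$ and $I_{n/2}*f$ unchanged, we may assume $\Omega=B_R(0)$ and normalize $\|f\|_{L^2}=1$; we may also take $f\ge0$. Writing $K(z)=(2\pi)^{-n/2}|z|^{-n/2}$ and $u=I_{n/2}*f$, only the values of $K$ on $\{|z|<2R\}$ intervene, and the decreasing rearrangement of that truncated kernel is $K^*(s)=c_1s^{-1/2}$ on $(0,\tau_0)$ (and $0$ beyond), with $c_1=(2\pi)^{-n/2}(|S^{n-1}|/n)^{1/2}$ and $\tau_0=\tfrac{|S^{n-1}|}{n}(2R)^n$. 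The identity $b_nc_1^2=1$ is where the constant $b_n$ enters.

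\emph{Weighted rearrangement estimate.} Let $u^\circ$ denote the decreasing rearrangement of $u$ with respect to $\mu$. Since $v\mapsto e^{(1+\beta)b_nv^2}$ is increasing on $[0,\infty)$,
\[
\int_\Omega e^{(1+\beta)b_nu^2}\,d\mu=\int_0^{\mu(\Omega)}e^{(1+\beta)b_n\,u^\circ(t)^2}\,dt .
\]
Applying O'Neil's lemma on rearrangements of convolutions, but measuring the super-level sets of $u$ with $\mu$ and invoking $|E|\ge(t/\kappa)^{1/(1+\beta)}$ for such sets (equivalently: split $f$, around each competitor point $x$, into its restriction to the Lebesgue ball of the appropriate measure and the remainder, dominate the far part by $\int_{\psi(t)}^\infty K^*f^*$ and the near part by O'Neil's zeroth-order term), one gets
\[
u^\circ(t)\ \le\ c_1\int_{\psi(t)}^{\tau_0}s^{-1/2}f^*(s)\,ds\ +\ \Theta(t),\qquad \psi(t):=(t/\kappa)^{1/(1+\beta)},
\]
with $\Theta(t)=\psi(t)\,K^{**}(\psi(t))\,f^{**}(\psi(t))\le C(n)$ by Cauchy--Schwarz and $\|f\|_{L^2}=1$ (the smooth part of the kernel, which would add another bounded term, is absent on a ball).

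\emph{One-dimensional reduction and Adams' lemma.} In $\int_{\psi(t)}^{\tau_0}s^{-1/2}f^*(s)\,ds$ substitute $s=\tau_0e^{-\zeta}$: it becomes $\int_0^{\xi(t)}\phi(\zeta)\,d\zeta$ with $\phi(\zeta)=\tau_0^{1/2}e^{-\zeta/2}f^*(\tau_0e^{-\zeta})$ satisfying $\int_0^\infty\phi^2=\int_0^{\tau_0}(f^*)^2\le1$, and $\xi(t)=\log\tau_0+\tfrac1{1+\beta}\log(\kappa/t)$. Introduce $\eta$ by $t=\kappa\tau_0^{1+\beta}e^{-\eta}$, so that $\xi(t)=\log\tau_0+\tfrac{\eta}{1+\beta}$ and $dt=\kappa\tau_0^{1+\beta}e^{-\eta}\,d\eta$; it is precisely this Jacobian that produces the factor $1+\beta$. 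Rescaling $\zeta\mapsto\zeta/(1+\beta)$ (which sends $\phi$ to $\widehat\phi(\zeta)=(1+\beta)^{-1/2}\phi(\zeta/(1+\beta))$ with $\|\widehat\phi\|_{L^2(0,\infty)}=\|\phi\|_{L^2(0,\infty)}\le1$) and using $b_nc_1^2=1$, the previous estimate becomes
\[
\sqrt{(1+\beta)b_n}\;u^\circ(t(\eta))\ \le\ \int_0^{\eta}\widehat\phi(\zeta)\,d\zeta\ +\ C',\qquad C'=\sqrt{(1+\beta)b_n}\,C,
\]
whence $\int_\Omega e^{(1+\beta)b_nu^2}\,d\mu\le\kappa\tau_0^{1+\beta}\int_{\eta_0}^\infty\exp\!\big((\textstyle\int_0^\eta\widehat\phi+C')^2-\eta\big)\,d\eta$ for a finite $\eta_0=\eta_0(n,\beta,R)$ (the part $\eta<\eta_0$, after a harmless extension, being $O(1)$). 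This last integral falls under Adams' one-dimensional lemma ($p=p'=2$, kernel $a(\zeta,\eta)=\mathbf 1_{\{\zeta<\eta\}}$, for which $a\le1$ on $\{\zeta<\eta\}$ and $\sup_\eta\int_\eta^\infty a^2=0$), taken in the form where a bounded additive term $C'$ inside the square does not spoil the conclusion; this yields $c_0=c_0(n,\beta,R)$ and proves the inequality.

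\emph{Where the difficulty lies.} Both delicate points concern the \emph{sharpness} of the constant $(1+\beta)b_n$. First, the weighted rearrangement estimate must reproduce exactly the constant $c_1$ in front of the Riesz integral and must carry the lower limit $\psi(t)=(t/\kappa)^{1/(1+\beta)}$; only then does the logarithmic change of variables rescale the Moser variable by $1/(1+\beta)$ and turn $b_n$ into $(1+\beta)b_n$. Second, the additive term $C'$ must be absorbed \emph{without any $(1+\varepsilon)$ loss}: the Adams inequality is false above its critical constant, so one may not estimate $(\int_0^\eta\widehat\phi+C')^2\le(1+\varepsilon)(\int_0^\eta\widehat\phi)^2+C_\varepsilon$. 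Accommodating the bounded perturbation is exactly the robustness built into (the proof of) Adams' one-dimensional lemma — a Moser-type calculus argument exploiting that $\int_0^\infty\widehat\phi^2\le1$ is an equality-constrained budget — and that lemma, rather than anything in the reduction above, is the substantive ingredient.
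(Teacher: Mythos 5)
The paper itself offers no proof of this statement: it is quoted directly from Lam--Lu \cite{LamLu}. So your attempt has to be judged as a reconstruction of the Adams-type argument, and in outline it is the right one: the bound $\mu(E)\le\kappa|E|^{1+\beta}$ for $d\mu=|x|^{n\beta}dx$ (radially decreasing weight), which gives $u^{\circ}(t)\le u^{*}(\psi(t))$ with $\psi(t)\sim(t/\kappa)^{1/(1+\beta)}$, then O'Neil's inequality, then the logarithmic change of variables whose Jacobian converts $b_n$ into $(1+\beta)b_n$; your constants $c_1$, $b_nc_1^2=1$ are correct, and the bookkeeping slip in $\xi(t)$ (your substitution gives $\xi=\eta/(1+\beta)$, not $\log\tau_0+\eta/(1+\beta)$) is harmless. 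The genuine problem is the last step. After bounding the O'Neil correction by a constant, $\Theta(t)\le C(n)$, you invoke ``Adams' one-dimensional lemma in the form where a bounded additive term $C'$ inside the square does not spoil the conclusion.'' No such form exists, because that statement is false: take $\widehat\phi=\lambda^{-1/2}\mathbf{1}_{(0,\lambda)}$, so $\int_0^\infty\widehat\phi^2=1$, and note that for $\eta\ge\lambda$ the integrand equals $\exp\bigl((\sqrt{\lambda}+C')^2-\eta\bigr)$, whence $\int_0^\infty\exp\bigl((\int_0^\eta\widehat\phi\,d\zeta+C')^2-\eta\bigr)d\eta\ \ge\ e^{2C'\sqrt{\lambda}+C'^2}\longrightarrow\infty$ as $\lambda\to\infty$. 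This $\widehat\phi$ is (up to the rescaling) the rearrangement profile of a Moser-type $f$ supported in $\Omega$, so your chain of estimates really does become unbounded along admissible data; bounding $\Theta(t)$ by a constant is exactly the kind of loss at the critical constant that you yourself warn against in the last paragraph.

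The fix is to not decouple the error from $\phi$. Keep $\Theta(t)=\psi(t)K^{**}(\psi(t))f^{**}(\psi(t))$ in kernel form: in the $\zeta$ variable it equals $\int_{\zeta>\xi(\eta)}a_1(\zeta,\eta)\phi(\zeta)\,d\zeta$ with $a_1(\zeta,\eta)=2c_1e^{-(\zeta-\xi(\eta))/2}$, and similarly for the bounded errors coming from the multiplicative constant you dropped in $\psi(t)$ and from the extension to $\eta<\eta_0$. Because $\beta\le0$ gives $\xi(\eta)=\eta/(1+\beta)\ge\eta$, these kernels are supported in $\{\zeta>\eta\}$ and have uniformly bounded $L^2(d\zeta)$ norm, so the combined kernel satisfies precisely the hypotheses of Adams' Lemma~1 ($a\le1$ on $\{0<\zeta<\eta\}$ and $\sup_\eta\int_{\{\zeta<0\}\cup\{\zeta>\eta\}}a^2\,d\zeta<\infty$), which then yields the uniform bound with the sharp constant: the perturbation is automatically small whenever the main term exhausts the $L^2$ budget of $\phi$, which is the mechanism your constant bound throws away. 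Two further small points: $\Omega$ is only assumed to have finite measure, so it need not lie in any ball; but no truncation is needed, since $f^*$ vanishes beyond $|\Omega|$ one can simply take $\tau_0=|\Omega|$ with the untruncated kernel. And your ``weighted O'Neil'' step is most cleanly justified as $u^{\circ}(t)\le u^{*}(\psi(t))\le u^{**}(\psi(t))$ followed by the standard O'Neil inequality, rather than the sketched splitting argument.
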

	
Combining arguments  in \cite{BCY,FaMa,Fontana}, we now 
	prove the main theorem of this section.

	\begin{proof}[Proof of Theorem \ref{thm:We-have}]
		Let $f=\<\square u(y),\square u(y)\>_{g_S}^{\frac{1}{2}}$. We first
		assume that 
		\[
		0<\|f\|_{L^{2}(\S^n,dV_{S})}\leq1.
		\]
		Pick a small geodesic ball $B_{\delta}(x_{N})$ centered at the north
		pole $x_{N}$ on $\S^n$. Using a normal coordinate at $x_{N}$,
		we may write $g_{ij}(y)=\delta_{ij}+O(|y|^{2})$. Define
		\[
		u_{\delta}(x):=\frac{1}{(2\pi)^{\frac{n}{2}}}\int_{B_{\delta}(0)}|x-y|^{-\frac{n}{2}}f(y)dy.
		\]
		Then, from Lemma \ref{lem:Fix-.-Let}, we have 
		\begin{equation}
			|u(x)-\bar{u}|\leq u_{\delta}(x)+C\|f\|_{L^{2}(\S^n,dV_{S})}.\label{eq:-5}
		\end{equation}
		Choosing a smaller $\delta,$ we may further assume that $\|f\|_{L^{2}(B_{\delta})}\leq1$.
		Let $\alpha=\min\{\epsilon,1\}$. From Theorem \ref{thm:Let-.-Let},
		we have
		\begin{equation}
			\int_{B_{\delta}(0)}\exp\left(\alpha b_{n}u_{\delta}^{2}\right)|y|^{n(\alpha-1)}dy\leq c_{0}.\label{eq:-4-1}
		\end{equation}
		From the metric expansion (\ref{eq:-4}), (\ref{eq:-5}), and (\ref{eq:-4-1}),
		we have 
		\begin{equation}
			\int_{B_{\delta}(x_{N})}\exp\left(\alpha b_{n}|u-\bar{u}|^{2}\right)dV_{g_{1}}\leq c_{0}'(n,\gamma).\label{eq:-6}
		\end{equation}
		For any non-constant function $v\in W^{\frac{n}{2},2}(\S^n,g_{S})$,
		we can apply (\ref{eq:-6}) to $\frac{v}{\|\square v\|_{L^{2}}}$.
		By mean value inequality, we have 
		\begin{equation}
			\int_{B_{\delta}(x_{N})}\exp\left(\alpha b_{n}\frac{|v-\bar{v}|^{2}}{\|\square v\|_{L^{2}}^{2}}+\frac{n^{2}}{4b_{n}\alpha}\cdot\|\square v\|_{L^{2}}^{2}\right)dV_{g_{1}}\geq\int_{B_{\delta}(x_{N})}\exp\left(n|v-\bar{v}|\right)dV_{g_{1}}.\label{eq:-7}
		\end{equation}
		We apply (\ref{eq:-7}) to $u$ to have 
		\begin{equation}
			\int_{B_{\delta}(x_{N})}\exp\left(n|u-\bar{u}|\right)dV_{g_{1}}\leq c_{0}'\exp\left(\frac{n^{2}}{4b_{n}\alpha}\cdot\|f\|_{L^{2}}^{2}\right)\text{vol}(B_{\delta}(x_{N}),g_{1}).\label{eq:-9}
		\end{equation}
		The metric $g_{1}$ on $\S^n\backslash B_{\delta/2}(x_{N})$ is
		uniformly equivalent to $g_{S}$. Thus, we may apply the standard
		Moser-Trudinger-Adams inequality proved by Fontana \cite{Fontana} on manifolds which shows
		\begin{align}
		 \label{eq:-8}   \int_{\S^n\backslash B_{\delta/2}}\exp(n|u-\bar{u}|)dV_{g_{1}}&\leq C(n,\gamma)\exp\left(\frac{n^{2}}{4b_{n}}\|f\|_{L^{2}}^{2}\right).
		\end{align}
		By (\ref{eq:-9}), (\ref{eq:-8}), and a partition of unity argument,
		we have  
		\begin{equation}
			\int_{\S^n}\exp(n|u-\bar{u}|)dV_{g_{1}}\leq C(n,\gamma)\exp\left(\frac{n^{2}}{4b_{n}\alpha}\cdot\|\square u\|_{L^{2}}^{2}\right).\label{eq:-10}
		\end{equation}
		Note $\frac{n^{2}}{4b_{n}\alpha}=\frac{n}{2(n-1)!|\S^n|\alpha}$.
		Thus, we have proved the theorem.
	\end{proof}

	To prove Corollary \ref{cor: weighted AMT ineq}, we use the same argument in Chang-Yang \cite[Lemma 1.6]{CY} to relate (\ref{eq:-11})
		and (\ref{eq:-12}). 
	\begin{proof}[Proof of Corollary \ref{cor: weighted AMT ineq}]
		 $\sqrt{P}$ has the same leading term as $\square$
		and the Green function of $\sqrt{P}$ has an expansion as in Lemma
		\ref{lem:Notations-as-above.}. Namely, the Green function $G_{\sqrt{P}}(x,y)$
		of $\sqrt{P}$ satisfies 
		\[
		|G_{\sqrt{P}}(x,y)|\leq\frac{1}{(2\pi)^{\frac{n}{2}}}\chi d(x,y)^{-\frac{n}{2}}\left(1+O(d(x,y)^{\frac{1}{2}})\right)+R(x,y),
		\]
		where $R(x,y)$ is a bounded function on $\S^n\times \S^n$. Then,
		we have (\ref{eq:-12}), following the same argument in the proof
		of Theorem \ref{thm:We-have}.
		
		The proof of (\ref{eq:-13}) follows from the conformal covariance
		of the Paneitz operator. 
	\end{proof}

	\section{Proof of the main theorem}\label{section 3}
	Denote $d\mu=h(x)dx$ on $\mr^n$ where $h(x)$ is a positive and smooth function and satisfies 
	\begin{equation}\label{weighted h}
		C^{-1}|x|^{-n-n\epsilon}\leq h(x) \leq C|x|^{-n-n\epsilon}, |x|\geq 1 
	\end{equation} for some positive constant $C$ and $\epsilon>0.$
	Let $\mathcal{H}$ denote the space with norm
	$$\|u\|^2_{\mathcal{H}}=\int_{\mr^n}((-\Delta)^{\frac{n}{4}}u)^2 d x+ \int_{\mr^n}u^2 d \mu$$
	with  the convention $(-\Delta)^{\frac{1}{2}}:=\nabla $. Meanwhile, we set 
	$$\tilde{\mathcal{H}}=\{u\in\mathcal{H}|\int_{\mr^n}ud \mu=0\}.$$ For our main application, we will consider functions in $\mathcal{H}$ and $\tilde{\mathcal{H}}$. The following Lemma gives $L^p$ estimates for functions in $\mathcal{H}$. The proof follows easily from Corollary \ref{cor: weighted AMT ineq}.

	\begin{lemma}\label{lem: weight sobolev}
		For any $v\in \mathcal{H}$ and $p>1$, there exists some positive constant $C$ such that 
		\begin{equation}
		    \|v\|_{L^p(d\mu)}\leq C\|v\|_{\mathcal{H}}. \label{tech_lem}
		\end{equation}
	\end{lemma}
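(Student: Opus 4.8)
The plan is to split $v$ into its weighted average plus a mean‑zero remainder, and to estimate the remainder using the weighted Moser--Trudinger--Adams inequality \eqref{eq:-13} of Corollary \ref{cor: weighted AMT ineq} after a suitable rescaling. Since by \eqref{weighted h} the measure $d\mu$ is comparable to $e^{n\gamma}\,dx$ (so the two resulting $\mathcal H$-norms are equivalent and $d\mu$ has finite total mass), I would simply assume $d\mu=e^{n\gamma}\,dx$, so that the average against $d\mu$ coincides with the average $\bar u$ appearing in Corollary \ref{cor: weighted AMT ineq}. Because $d\mu$ is finite, Hölder's inequality gives $\|v\|_{L^p(d\mu)}\le \mu(\R^n)^{1/p-1/k}\|v\|_{L^k(d\mu)}$ for any $k\ge p$, so it is enough to prove \eqref{tech_lem} with $p$ replaced by an arbitrary integer $k\ge 2$.

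Next, set $\bar v=\mu(\R^n)^{-1}\int_{\R^n}v\,d\mu$ and $w=v-\bar v$, so $\int_{\R^n}w\,d\mu=0$. The constant part is immediate: by Cauchy--Schwarz $|\bar v|\le \mu(\R^n)^{-1/2}\|v\|_{L^2(d\mu)}\le C\|v\|_{\mathcal H}$, hence $\|\bar v\|_{L^k(d\mu)}=|\bar v|\,\mu(\R^n)^{1/k}\le C\|v\|_{\mathcal H}$, and everything reduces to bounding $\|w\|_{L^k(d\mu)}$. For this, fix $\lambda>0$ and apply \eqref{eq:-13} to $\lambda w\in\mathcal H$ (noting $\overline{\lambda w}=0$), which yields
\begin{equation*}
\int_{\R^n}e^{\,n\lambda|w|}\,d\mu\;\le\;\exp\!\Big(C+c\,\lambda^2\!\int_{\R^n}|(-\Delta)^{n/4}v|^2\,dx\Big),\qquad c:=\frac{n}{2(n-1)!\,|\S^n|\min\{\epsilon,1\}}.
\end{equation*}
Then, using $e^{t}\ge t^k/k!$ for $t\ge0$ with $t=n\lambda|w|$, and writing $A:=\|(-\Delta)^{n/4}v\|_{L^2(\R^n)}\le\|v\|_{\mathcal H}$, one gets
\begin{equation*}
\int_{\R^n}|w|^k\,d\mu\;\le\;\frac{k!}{n^k\lambda^k}\exp\!\big(C+c\,\lambda^2A^2\big).
\end{equation*}
If $A>0$ I would take $\lambda=1/A$, obtaining $\|w\|_{L^k(d\mu)}\le C_k A\le C_k\|v\|_{\mathcal H}$; if $A=0$, letting $\lambda\to\infty$ forces $\int_{\R^n}|w|^k\,d\mu=0$, and the bound holds trivially. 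Combining the estimates for $w$ and $\bar v$ (and the Hölder reduction above) gives \eqref{tech_lem}.

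This argument is essentially a repackaging of Corollary \ref{cor: weighted AMT ineq}, so I do not expect a serious obstacle. The only two points that require a little care are the rescaling in $\lambda$, which is what converts the exponential–quadratic control of the Moser--Trudinger--Adams inequality into the desired linear bound in $\|v\|_{\mathcal H}$, and the harmless bookkeeping identifying the average against $d\mu$ used here with the average against $dV_{g_1}$ (equivalently $e^{n\gamma}\,dx$) in Corollary \ref{cor: weighted AMT ineq}, which is justified because the two weights are comparable and differ only on a set of finite measure.
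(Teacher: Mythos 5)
Your argument is correct and is exactly the route the paper intends: the authors give no details and simply remark that the lemma ``follows easily from Corollary \ref{cor: weighted AMT ineq}'', and your proof (splitting off the weighted mean, applying \eqref{eq:-13} to $\lambda(v-\bar v)$, using $e^{t}\geq t^{k}/k!$, and optimizing $\lambda=1/\|(-\Delta)^{n/4}v\|_{L^2}$) is the standard way to fill in that gap. The bookkeeping points you flag (comparability of $d\mu$ with $e^{n\gamma}dx$, finiteness of $\mu(\R^n)$, and the H\"older reduction to integer exponents) are all handled correctly.
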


    Now we can prove the existence of solutions to \eqref{equ: originial eqaution} via variational method.
	\begin{theorem}\label{thm: alpha range}
		If  a smooth  function $Q(x)$ is positive somewhere  and $$Q(x)=O(|x|^{-l}),$$ where $l>0$, 
		  for each $\alpha$ satisfies
          \begin{equation}\label{alpha range}
              \max\{0, 2-\frac{2l}{n}\}<\alpha<2,
          \end{equation}
	then there exists  a solution $u$  to \eqref{equ: originial eqaution} satisfying
		\begin{align}
		    \int_{B_R(0)}|u(x)|\ud x =o(R^{n+1}), \label{eq:integral_est}
		\end{align} and
		\begin{align}
		    \int_{\R^n}Qe^{nu}\ud x=\frac{(n-1)!|\mathbb{S}^n|}{2}\alpha. \label{eq:total_Q}
		\end{align}
	\end{theorem}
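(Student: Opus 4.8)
The idea is to realize the solution $u$ as a critical point of a suitably normalized functional on $\tilde{\mathcal{H}}$, using a Lagrange-multiplier formulation so that the constraint $\int Q e^{nu}=\text{const}$ fixes the total $Q$-curvature, and the weighted Moser–Trudinger–Adams inequality of Corollary~\ref{cor: weighted AMT ineq} furnishes coercivity and compactness. Concretely, for $u=v+c$ with $v\in\tilde{\mathcal{H}}$ and $c\in\R$, write down the functional
\begin{equation*}
J(v)=\frac{n}{2(n-1)!|\S^n|\,\alpha}\int_{\R^n}|(-\Delta)^{\frac n4}v|^2\,\ud x-\log\int_{\R^n}Q_+e^{nv}\,\ud\mu+\log\int_{\R^n}e^{nv}\,\ud\mu,
\end{equation*}
where I absorb the volume weight $e^{n\gamma}$ into $\ud\mu$ and reduce \eqref{equ: originial eqaution} to the sphere via the identification $\mathcal{H}\simeq W^{\frac n2,2}(\S^n,g_1)$. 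The exponent $\alpha$ in \eqref{alpha range} is chosen precisely so that, after the conformal change turning $Qe^{nu}\ud x$ into a density against $\ud\mu=e^{n\gamma}\ud x$, the effective weight $Q\,e^{-n\gamma}$ still lies in the right $L^q$ class: since $Q=O(|x|^{-l})$ and $e^{n\gamma}=|x|^{-n(1+\epsilon)}$, one needs $\epsilon$ small relative to $l/n$, and the admissible range for $\alpha=\min\{\epsilon,1\}$-type data is exactly $\max\{0,2-2l/n\}<\alpha<2$.

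**Key steps, in order.** First I would set up the reduction to $\S^n$ and the decomposition $u=v+\bar u$ with $\bar u$ the $g_1$-average, so that the PDE becomes the Euler–Lagrange equation of $J$ on $\tilde{\mathcal{H}}$; here Lemma~\ref{lem: weight sobolev} guarantees $e^{nv}\in L^1(\ud\mu)$ for $v\in\mathcal{H}$, so $J$ is well-defined. Second, I would prove $J$ is bounded below and coercive on $\tilde{\mathcal{H}}$: the crucial inequality is \eqref{eq:-13}, which bounds $\log\int e^{n|v|}\ud\mu$ by $\frac{n}{2(n-1)!|\S^n|\min\{\epsilon,1\}}\int|(-\Delta)^{n/4}v|^2+C$; choosing $\epsilon$ so that $\min\{\epsilon,1\}$ beats $\alpha$ (using $\alpha>2-2l/n$ to have room) gives a strictly positive leading coefficient in $J(v)$ after cancellation, hence coercivity modulo a lower-order term controlled by $L^2(\ud\mu)$ via Lemma~\ref{lem: weight sobolev} and a Poincaré-type inequality. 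Third, take a minimizing sequence $v_k$; coercivity bounds $\|v_k\|_{\mathcal{H}}$, so pass to a weak limit $v_\infty$; weak lower semicontinuity handles the Dirichlet term, and the compact embedding $\mathcal{H}\hookrightarrow L^p(\ud\mu)$ (again Lemma~\ref{lem: weight sobolev}, upgraded to compactness by a standard Rellich-type argument on $\S^n$ plus the Moser–Trudinger exponential integrability to pass to the limit in $\int Q_+e^{nv_k}\ud\mu$) gives convergence of the logarithmic terms. Fourth, derive the Euler–Lagrange equation: the minimizer satisfies $(-\Delta)^{n/2}v=\lambda\, Q_+ e^{nv}e^{n\gamma}-\mu\, e^{nv}e^{n\gamma}$ on $\R^n$ for Lagrange multipliers; testing against constants forces the two constants to relate so that, after adding back a constant to $v$ and undoing the conformal factor $e^{n\gamma}$, one lands on $(-\Delta)^{n/2}u=Q e^{nu}$ with $\int Q e^{nu}=\frac{(n-1)!|\S^n|}{2}\alpha$, which is \eqref{eq:total_Q}. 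Finally, \eqref{eq:integral_est} follows from $v\in\mathcal{H}\subset L^2(\ud\mu)$: since $\ud\mu$ has density $\sim|x|^{-n(1+\epsilon)}$, Hölder on $B_R$ against the weight gives $\int_{B_R}|v|\ud x=o(R^{n+1})$, and the additive constant contributes $O(R^n)$.

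**Main obstacle.** The delicate point is the compactness/limit-passage in $\int Q_+ e^{nv_k}\ud\mu$ and the verification that the leading-order coefficient in $J$ is genuinely positive after the Moser–Trudinger cancellation. One must check that $\alpha<2$ leaves the constant $\frac{n}{2(n-1)!|\S^n|\alpha}$ strictly larger than the sharp Adams constant $\frac{n}{2(n-1)!|\S^n|\min\{\epsilon,1\}}$ (which requires choosing $\epsilon\ge$ something, hence the interplay with $l$ via $\alpha>2-2l/n$ so that $Q_+ e^{-n\gamma}$ is integrable against $e^{nv}$), and simultaneously that no concentration of $e^{nv_k}\ud\mu$ at a point or at infinity occurs — infinity being handled by the weight $e^{n\gamma}$ decaying and the cone point at the north pole being exactly the one where the weighted Adams inequality was tailored. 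I expect the bulk of the work to be a careful bookkeeping of these constants together with a concentration-compactness dichotomy ruling out loss of mass, after which the existence of a minimizer and the identification of the multiplier are routine.
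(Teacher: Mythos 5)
Your general strategy (direct variational method in the weighted space, coercivity from the weighted Moser--Trudinger--Adams inequality, with $\epsilon$ tied to $\alpha$ and $l$) is the same as the paper's, but the functional you minimize is structurally wrong, and its critical points do not solve \eqref{equ: originial eqaution}. The paper first subtracts a background $u_0$ with $u_0=-\alpha\log|x|$ for $|x|>1/2$, so that the problem becomes $(-\Delta)^{\frac n2}w+\psi=Ke^{nw}$ with $\psi=(-\Delta)^{\frac n2}u_0$ compactly supported, $\int_{\R^n}\psi\,dx=\frac{(n-1)!|\S^n|}{2}\alpha$, and $K=Qe^{nu_0}=O(|x|^{-l-n\alpha})$; the functional then contains the \emph{linear} term $\int\psi u\,dx$, which simultaneously gives translation invariance $F(u+c)=F(u)$ and, after adjusting the minimizer by a constant, pins the total curvature to the value in \eqref{eq:total_Q}. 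In your $J$ this background datum is absent and its role is taken by the term $+\log\int e^{nv}\,d\mu$; that does give translation invariance, but its Euler--Lagrange contribution is a genuinely nonlinear term $-\mathrm{const}\cdot e^{nv}e^{n\gamma}$ which no choice of Lagrange multipliers, added constants, or ``undoing the conformal factor'' can remove (conformal covariance of the Paneitz operator would produce a linear term involving the $Q$-curvature of the singular metric $g_1$, not a second logarithm). So your minimizer solves an equation of the form $(-\Delta)^{\frac n2}v=\lambda Q_+e^{nv}e^{n\gamma}-\mu e^{nv}e^{n\gamma}+\dots$, not the prescribed $Q$-curvature equation. Two further structural problems: replacing $Q$ by $Q_+$ changes the prescribed curvature wherever $Q<0$ (the paper keeps the sign-changing weight $K$ and minimizes over the open set $\mathcal{H}_K=\{\int Ke^{nu}>0\}$, nonempty because $Q$ is positive somewhere); and without the factor $e^{nu_0}$ your decay bookkeeping fails --- to dominate the nonlinear term by the weight one needs $Q=O(|x|^{-n(1+\epsilon)})$, i.e. $l>n$, whereas the paper only needs $K=O(|x|^{-l-n\alpha})\lesssim |x|^{-n-n\epsilon}$ with $\epsilon=\alpha+\frac ln-1$, which is exactly what \eqref{alpha range} provides.

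There are also two quantitative slips. Your coercivity criterion (Dirichlet coefficient $\frac{n}{2(n-1)!|\S^n|\alpha}$ strictly larger than the Adams constant $\frac{n}{2(n-1)!|\S^n|\min\{\epsilon,1\}}$) amounts to $\alpha<\min\{\epsilon,1\}$ and would not cover the stated range; the paper's normalization ($\tfrac12$ Dirichlet term against $\frac{(n-1)!|\S^n|\alpha}{2n}$ times the logarithm, combined with the linear--quadratic trick behind \eqref{eq:-13}) only requires $\frac12-\frac{\alpha}{4\min\{1,\epsilon\}}>0$, i.e. $\alpha<2\min\{1,\epsilon\}$, which with $\epsilon=\alpha+\frac ln-1$ is precisely \eqref{alpha range}. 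Finally, for \eqref{eq:integral_est}, $v\in L^2(d\mu)$ alone gives only $\int_{B_R}|v|\,dx=O(R^{n+n\epsilon/2})$, which is not $o(R^{n+1})$ once $\epsilon\geq 2/n$; one needs Lemma \ref{lem: weight sobolev} with $q$ large, as in the paper, and one must also add back $u_0$, whose contribution $O(R^n\log R)$ is what actually appears in the final estimate.
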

	
	\begin{proof}
		Choose a smooth function $u_0$ satisfying $u_0=-\alpha\log|x|$ for $|x|>1/2$ and define $$\psi:=(-\Delta)^{\frac{n}{2}}u_0.$$
		Notice that $\psi\equiv 0$ for $|x|>1/2$. 
		In particular, with the help  of divergence theorem,  it is not hard to verify that 
		$$\int_{\mr^n}\psi dx=\int_{B_1(0)}(-\Delta)^{\frac{n}{2}}u_0\ud x=-\int_{\partial B_1(0)}\frac{\partial (-\Delta)^{\frac{n}{2}-1}u}{\partial n}\ud \sigma.$$
		A direct computation yields that
		\begin{equation}\label{int psi}
			\int_{\mr^n}\psi dx=\frac{(n-1)!|\mathbb{S}^n|}{2}\alpha.
		\end{equation}
		Then the equation \eqref{equ: originial eqaution} is equivalent to
		\begin{equation}\label{equ: equivalent eqaution}
			(-\Delta)^{\frac{n}{2}}w+\psi=Ke^{nw},
		\end{equation}
		where
		$K(x):=Q(x)e^{nu_0}=O(|x|^{-l-n\alpha}).$
        
       Set
        \begin{equation}\label{epsilon def}
            \epsilon=\alpha+\frac{l}{n}-1>0
        \end{equation}
        due to our assumption \eqref{alpha range}.
        Set the  function $h(x)$ as follows 
 $$h(x)=(1+|x|^2)^{-\frac{n}{2}-\frac{n\epsilon}{2}}.$$
   Set  the weighted measure $\ud \mu=h(x)\ud x$. 
		Meanwhile, one has
        \begin{equation}\label{equ: Ke^nu leq e^nu mu}
            \int_{\mr^n}K(x)e^{nu}\ud x\leq C\int_{\mr^n}e^{nu}\ud\mu. 
        \end{equation}
	Consider the following set
		$$\mathcal{H}_K=\{u\in \mathcal{H}|\int_{\mr^n}Ke^{nu}\ud x>0\}.$$
		Since $K(x)$ is positive somewhere, it is not difficult to check that $\mathcal{H}_K$ is not empty.
		For any $u\in \mathcal{H}_K$, we consider the  following functional 
		$$F(u)=\frac{1}{2}\int_{\mr^n}((-\Delta)^{\frac{n}{4}}u)^2 d x+\int_{\mr^n}\psi u dx-\frac{(n-1)!|\mathbb{S}^n|}{2n}\alpha\log\int_{\mr^n}Ke^{nu} d x.$$
		For any constant $c$, by \eqref{int psi}, we have
		$$F(u+c)=F(u).$$
		Thus we may assume that 
		\begin{equation}\label{bar u=0}
			\int_{\mr^n}ud\mu=0. 
		\end{equation}
		Now, apply Young's inequality to obtain that 
		$$|\int_{\mr^n}\psi u dx|=|\int_{\mr^n}\psi u h^{-1}d\mu|\leq C(\eta)\|\psi h^{-1}\|_{L^2(d\mu)}+\eta\int_{supp(\psi)}u^2d \mu$$
		where $\eta>0$ is small to chosen later and $supp(\psi)$ denotes the compact support of the function $\psi$. Then making use of \eqref{equ: Ke^nu leq e^nu mu},  Corollary \ref{cor: weighted AMT ineq}  and Lemma \ref{lem: weight sobolev}, we  obtain that 
		$$F(u)\geq \left(\frac{1}{2}-\frac{\alpha}{4\min\{1,\epsilon\}}-C_1\eta\right)\int_{\R^n}((-\Delta)^{\frac{n}{4}}u)^2 d x-C(\eta, \alpha,l,n)$$
		Based on the assumption \eqref{alpha range} and \eqref{epsilon def}, 	we can choose small $\eta>0$ such that
	$$\frac{1}{2}-\frac{\alpha}{4\min\{1,\epsilon\}}-C_1\eta>0.$$
	With the help of  a standard variational method, there exists a function  $v\in \mathcal{H}_K$  attains the minimum of $F(u)$ which weakly satisfies 
	\begin{equation}\label{minimum equation}
		(-\Delta)^{\frac{n}{2}}v+\psi =\frac{(n-1)!|\mathbb{S}^n|\alpha}{2}\frac{Ke^{nv}}{\int_{\R^n}Ke^{nv}\ud x}.
	\end{equation}
	We can choose  a suitable constant $C_2$ 
	\begin{equation}\label{equ v_1}
		(-\Delta)^{\frac{n}{2}}v_1+\psi=Ke^{nv_1}.
	\end{equation}
where $v_1=v+C_2$
and 
$$\int_{\mr^n}Ke^{nv_1}\ud x=\frac{(n-1)!|\mathbb{S}^n|}{2}\alpha.$$
	Standard elliptic estimate shows that $v_1$ is actually smooth.
Since $v_1\in \mathcal{H}$, Lemma \ref{lem: weight sobolev} yields that for any $q>1$, there holds
\begin{equation}\label{v L^q}
    \|v\|_{L^q(d\mu)}\leq C\|v\|_{\mathcal{H}}<+\infty.
\end{equation}
Making use of H\"older's inequality, one has
$$
    \int_{B_R(0)}|v_1|\ud x\leq (\int_{B_R(0)}|v|^qh\ud x)^{\frac{1}{q}}(\int_{B_R(0)}h^{-\frac{1}{q-1}}\ud x)^{\frac{q-1}{q}}\leq CR^{n+\frac{n\epsilon}{q}}.
$$
 By choosing suitable large $q>1$, one has
 \begin{equation}\label{v_1 grwoth}
     \int_{B_R(0)}|v_1|\ud x=o(R^{n+1}).
 \end{equation}

	Finally,  choose 
	$$u(x)=v_1(x)+u_0(x)$$ which is our desired solution to \eqref{equ: originial eqaution}.
Based on the estimate \eqref{v_1 grwoth}, 	it is easy to see
	$$\int_{B_R(0)}|u(x)|\ud x=o(R^{n+1}),$$ and
	$$\int_{\R^n}Qe^{nu}\ud x=\int_{\R^n}Ke^{nv_1}\ud x=\frac{(n-1)!|\mathbb{S}^n|}{2}\alpha.$$
We have proved the theorem.
	\end{proof}

\begin{rem}
    With some further effort, one can show that $v_1(x)$ is uniformly bounded. The argument is similar to Theorem 2.2 in \cite{McOwen}. The estimates required in weighted Sobolev spaces are provided in \cite[Theorem 5]{McOwen2} and \cite[Theorem 5.4]{cantor}.
\end{rem}

To finish our proof of Theorem \ref{thm:main theorem}, we need two results in the first author's work \cite{Li 23 Q-curvature}. Recall the definition of normal solutions in \cite{CQY}: a solution $u(x)$ to \eqref{equ: originial eqaution} is \emph{normal} if $Qe^{nu}\in L^1(\mr^n)$ and  $u(x)$ satisfies the integral equation
$$u(x)=\frac{2}{(n-1)!|\mathbb{S}^n|}\int_{\mr^n}\log\frac{|y|}{|x-y|}Q(y)e^{nu(y)}\ud y+C.$$
The first result that we need characterizes normal solutions from an integral estimate.
\begin{theorem}\label{thm: normal solution}
	(Theorem 2.2 in \cite{Li 23 Q-curvature}) If the solution $u(x)$ to \eqref{equ: originial eqaution} satisfies  $Qe^{nu}\in L^1(\mr^n)$ and 
		$$\int_{B_R(0)}|u|\ud x=o(R^{n+2}), $$ then $u(x)$ is normal.
\end{theorem}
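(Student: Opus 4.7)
The plan is to split $u$ into an integral potential of $Qe^{nu}$ plus a polyharmonic remainder, and then reduce the remainder to a constant via a Liouville-type argument.

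First, I would set
$$
v(x):=\frac{2}{(n-1)!|\S^n|}\int_{\R^n}\log\frac{|y|}{|x-y|}\,Q(y)e^{nu(y)}\,\ud y.
$$
The integrability $Qe^{nu}\in L^1(\R^n)$ together with the local integrability of $\log(|y|/|x-y|)$ (which has only a logarithmic singularity at $y=x$ and an absorbable one at $y=0$) ensures $v(x)$ is finite everywhere. Since $-\frac{2}{(n-1)!|\S^n|}\log|x|$ is a fundamental solution of $(-\Delta)^{n/2}$ on $\R^n$ for even $n$, differentiation under the integral gives $(-\Delta)^{n/2}v=Qe^{nu}$ in the distributional sense. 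Setting $h:=u-v$, the function $h$ is then a distributional---and, by interior elliptic regularity for the polyharmonic operator, smooth---solution of $(-\Delta)^{n/2}h=0$ on $\R^n$.

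Next I would control the growth of $v$. Splitting the defining integral over the regions $\{|y|\leq 1\}$, $\{|y-x|\leq 1\}$, and the complement, and using Fubini together with $\|Qe^{nu}\|_{L^1}<\infty$, one extracts a pointwise bound of the form $|v(x)|\leq C(1+\log(1+|x|))$. Integrating over $B_R(0)$ gives $\int_{B_R(0)}|v|\,\ud x=O(R^n\log R)=o(R^{n+2})$, and combined with the hypothesis $\int_{B_R(0)}|u|\,\ud x=o(R^{n+2})$ this yields
$$
\int_{B_R(0)}|h|\,\ud x=o(R^{n+2}).
$$

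A Liouville-type theorem for polyharmonic functions now applies. Since $(-\Delta)^{n/2}h=0$ on $\R^n$ and its $L^1$-averages have sub-quadratic growth, standard mean value inequalities (or an Almansi decomposition argument) force $h$ to be a polynomial, with the bound $o(R^{n+2})$ restricting its degree to at most $1$, so $h(x)=C_0+b\cdot x$ for some constant $C_0$ and some $b\in\R^n$.

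It remains to show $b=0$, and I expect this to be the main technical obstacle. Here I would combine the sharper asymptotic expansion $v(x)=-\alpha\log|x|+O(1)$, with $\alpha=\frac{2}{(n-1)!|\S^n|}\int_{\R^n}Qe^{nu}\,\ud x$, with the $L^1$ integrability of $Qe^{nu}$: a nonzero $b$ would force $e^{nu}$ to grow exponentially in a half-space, which is hard to reconcile with $\|Qe^{nu}\|_{L^1}<\infty$ when balanced against $Q$ and $e^{nv}$ in suitable cones. The strength of the assumption $o(R^{n+2})$ rather than a weaker growth bound is precisely what rules out the quadratic and higher polynomial contributions from $h$ at the previous step, reducing the problem to the genuinely delicate elimination of the linear part.
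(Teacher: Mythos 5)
The paper itself offers no proof of this statement---it is quoted from \cite{Li 23 Q-curvature}---so your argument has to stand on its own. Its first three steps are the standard route and are essentially sound: $v$ is well defined, $(-\Delta)^{\frac n2}v=Qe^{nu}$ in the distributional sense, and $h=u-v$ is an entire (smooth, by hypoellipticity) polyharmonic function. One caveat: the pointwise bound $|v(x)|\le C(1+\log(1+|x|))$ does \emph{not} follow from $Qe^{nu}\in L^1$ alone (an $L^1$ density can produce arbitrarily large positive spikes of the log-potential), but the bound you actually use, $\int_{B_R(0)}|v|\,\ud x=O(R^n\log R)$, does follow directly by Fubini: for $|y|\ge 2R$ and $|x|\le R$ the two logarithms cancel up to a bounded factor, and the $\log|y|$ singularity at $y=0$ is handled by local boundedness of $Qe^{nu}$. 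The Liouville step is also fine: interior estimates for polyharmonic functions give $\nabla^3h\equiv0$ from $\int_{B_R}|h|=o(R^{n+2})$, and a nonzero quadratic part would contribute at least $cR^{n+2}$ to $\int_{B_R}|h|$, so $h(x)=C_0+b\cdot x$.

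The genuine gap is precisely the step you flag, and it cannot be closed under the hypotheses as written: eliminating $b\neq0$ is impossible with only $Qe^{nu}\in L^1$ and $\int_{B_R}|u|\,\ud x=o(R^{n+2})$. Indeed, take $0\le f\in C_c^{\infty}(\R^n)$, $f\not\equiv0$, let $v$ be its log-potential, set $u:=v+x_1$ and $Q:=fe^{-nu}$; then $u$ solves \eqref{equ: originial eqaution}, $Qe^{nu}=f\in L^1$, $Q$ is positive somewhere, and $\int_{B_R}|u|\,\ud x=O(R^{n+1})=o(R^{n+2})$, yet $u-v$ is nonconstant, so $u$ is not normal (even simpler: $u=x_1$, $Q\equiv0$). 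So no half-space or cone argument balancing $Q$ against $e^{nv}$ can succeed at this level of generality; such arguments require a positive lower bound on $Q$ in a cone at infinity, which is not assumed. The statement as transcribed must carry a typo: the hypothesis that makes the theorem true---and the one the rest of the paper actually supplies, see \eqref{eq:integral_est} in Theorem \ref{thm: alpha range}---is $\int_{B_R(0)}|u|\,\ud x=o(R^{n+1})$. Under that hypothesis your own Liouville step already finishes the proof: since $\int_{B_R}|b\cdot x|\,\ud x\sim c_n|b|R^{n+1}$, the bound $\int_{B_R}|h|\,\ud x=o(R^{n+1})$ forces $b=0$, hence $h$ is constant and $u$ is normal; the ``delicate elimination of the linear part'' you anticipate is not needed.
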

For normal solutions, the second result in \cite{Li 23 Q-curvature} gives a characterization for complete metrics. 
\begin{theorem}\label{thm: complete}
	(Theorem 1.4 in \cite{Li 23 Q-curvature}) If $u(x)$ is a normal solution satisfying \eqref{equ: originial eqaution}, and $g=e^{2u}|dx|^2$, then for each fixed point $p$, we have
	$$\lim_{|x|\to\infty}\frac{\log d_g(x,p)}{\log|x-p|}=\max\left\{1-\frac{2}{(n-1)!|\mathbb{S}^n|}\int_{\mr^n}Qe^{nu}\ud x,0\right\},$$
	where $d_g(\cdot, \cdot)$ is the geodesic distance on $\mr^n$ respect to the metric $g$. In particular, $g$ is complete if the right hand side is strictly positive.
\end{theorem}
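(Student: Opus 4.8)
Write $\alpha:=\frac{2}{(n-1)!|\mathbb{S}^n|}\int_{\mr^n}Qe^{nu}\,dx$, so that the claimed limit is $\max\{1-\alpha,0\}$. The plan is to reduce the statement to a two-sided pointwise estimate on $e^{u}$ at infinity and then trap $d_g(\cdot,p)$ between a straight-line upper bound and a rotationally symmetric lower bound. The analytic core is the growth of $u$: with the finite measure $d\nu:=\frac{2}{(n-1)!|\mathbb{S}^n|}Qe^{nu}\,dx$ of total mass $\alpha$, normality gives
\[
u(x)=C_1-\int_{\mr^n}\log|x-y|\,d\nu(y)=C_1-\alpha\log|x|-\int_{\mr^n}\log\frac{|x-y|}{|x|}\,d\nu(y),
\]
and I would show the last integral is $o(\log|x|)$ as $|x|\to\infty$ by splitting $\mr^n$ into $\{|y|\le|x|/2\}$ (integrand bounded, contribution $O(|\nu|(\mr^n))$), the annulus $\{|x|/2\le|y|\le 2|x|\}$ (use local boundedness of the density of $|\nu|$, together with $\int_{B_1(x)}|\log|x-y||\,dy<\infty$, $|\nu|(\text{annulus})\to0$, and H\"older to absorb the kernel's mild singularity near $y=x$), and $\{|y|\ge 2|x|\}$ (dominated convergence, using that $\log_+|y|$ is $|\nu|$-integrable for normal solutions; see \cite{CQY} and \cite{Li 23 Q-curvature}). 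Since moreover $\log|z-p|-\log|z|\to0$, this yields: for every $\epsilon>0$ there is $R_\epsilon$ with $|z-p|^{-\alpha-\epsilon}\le e^{u(z)}\le|z-p|^{-\alpha+\epsilon}$ whenever $|z-p|\ge R_\epsilon$; and since $u$ is smooth, $e^{u}$ is bounded on $\overline{B_{R_\epsilon}(p)}$.

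\emph{Upper bound.} Testing $d_g(x,p)$ against the Euclidean segment $t\mapsto p+t\omega$, $\omega=(x-p)/|x-p|$, $t\in[0,|x-p|]$,
\[
d_g(x,p)\le\int_0^{|x-p|}e^{u(p+t\omega)}\,dt\le C_\epsilon+\int_{R_\epsilon}^{|x-p|}t^{-\alpha+\epsilon}\,dt,
\]
and since the right-hand side is $O\big(1+|x-p|^{(1-\alpha+\epsilon)_+}+\log|x-p|\big)$, we obtain $\limsup_{|x|\to\infty}\frac{\log d_g(x,p)}{\log|x-p|}\le(1-\alpha+\epsilon)_+$; letting $\epsilon\downarrow0$ gives $\le\max\{1-\alpha,0\}$.

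\emph{Lower bound.} If $\alpha\ge1$, every path from $p$ to $x$ with $|x-p|>1$ meets $\partial B_1(p)$, so $d_g(x,p)\ge\min_{\partial B_1(p)}d_g(\cdot,p)=:c_0>0$ by compactness, whence $\liminf_{|x|\to\infty}\frac{\log d_g(x,p)}{\log|x-p|}\ge0$. If $\alpha<1$, fix $\epsilon\in(0,1-\alpha)$; for any path $\gamma:[0,1]\to\mr^n$ from $p$ to $x$ with $|x-p|>R_\epsilon$, put $s(t)=|\gamma(t)-p|$ and let $t_0$ be the last time $s(t_0)=R_\epsilon$. On $[t_0,1]$ one has $|\dot\gamma|\ge|\dot s|$ and $e^{u(\gamma)}\ge s^{-\alpha-\epsilon}$, hence
\[
\mathrm{length}_g(\gamma)\ge\int_{t_0}^1 s^{-\alpha-\epsilon}|\dot s|\,dt\ge\Big|\int_{t_0}^1 s^{-\alpha-\epsilon}\dot s\,dt\Big|=\frac{|x-p|^{1-\alpha-\epsilon}-R_\epsilon^{1-\alpha-\epsilon}}{1-\alpha-\epsilon}.
\]
Taking the infimum over $\gamma$ and then $\epsilon\downarrow0$ gives $\liminf_{|x|\to\infty}\frac{\log d_g(x,p)}{\log|x-p|}\ge1-\alpha$. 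In both cases the two bounds agree, which is the asserted identity.

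If $\max\{1-\alpha,0\}>0$ then $\alpha<1$, and the lower bound forces $d_g(x,p)\to\infty$ as $|x|\to\infty$; thus $d_g(\cdot,p)$ is proper and $(\mr^n,g)$ is complete by Hopf--Rinow. The main obstacle is the first paragraph: establishing $u(x)=-\alpha\log|x|+o(\log|x|)$ for a general normal solution, where the delicate points are the logarithmic singularity of the kernel near the diagonal and the far-field tail, both controlled by local boundedness of $|Q|e^{nu}$ and the $\log_+$-integrability of $|Q|e^{nu}$ for normal solutions. Granting that, the remaining steps are elementary comparison arguments.
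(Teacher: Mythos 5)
This theorem is quoted from \cite{Li 23 Q-curvature}; the present paper contains no proof of it, so your argument can only be judged on its own terms. The comparison arguments in your upper- and lower-bound paragraphs, and the reduction of completeness to properness of $d_g(\cdot,p)$, are correct \emph{granted} the two-sided pointwise estimate $|z-p|^{-\alpha-\epsilon}\le e^{u(z)}\le|z-p|^{-\alpha+\epsilon}$ for all large $|z|$. The gap is exactly where you place it, but it is more serious than you indicate: for a general normal solution the only global information is $Qe^{nu}\in L^1(\mr^n)$, and that does not make the near-diagonal contribution $\int_{B_1(x)}\log\frac{1}{|x-y|}\,|Q|e^{nu}\,dy$ be $o(\log|x|)$ uniformly in $x$. ``Local boundedness of the density'' only yields a bound $M_x$ with no control as $|x|\to\infty$, and there are $L^1$ densities with thin spikes along a sequence $x_k\to\infty$ whose logarithmic potential at $x_k$ deviates from $-\alpha\log|x_k|$ by a fixed multiple of $\log|x_k|$; nothing in the definition of a normal solution excludes such behaviour, in either sign. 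So the pointwise asymptotic $u=-\alpha\log|x|+o(\log|x|)$ --- and with it both your straight-segment upper bound and your spherical lower bound --- can fail on exceptional sets, and the proofs in \cite{Li 23 Q-curvature} and \cite{CQY} are structured precisely to get around this, working with integral averages and with paths chosen to avoid the small set where the potential is anomalous, rather than with pointwise values of $u$. Two smaller points: your opening identity $u(x)=C_1-\int\log|x-y|\,d\nu(y)$ already presupposes finiteness of $\int\log|y|\,d\nu(y)$, i.e.\ the $\log_+$-moment you invoke later, which is not a consequence of normality; on the other hand your worry about the far-field tail is unnecessary, since for $|y|\ge2|x|$ the kernel $\log\frac{|y|}{|x-y|}$ is bounded and that term is $O(\|Qe^{nu}\|_{L^1})$ with no moment condition.

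It is worth noting that for the solutions actually produced in Section \ref{section 3} the missing estimate does hold pointwise: there $Q=O(|x|^{-l})$ and (by the Remark following Theorem \ref{thm: alpha range}) $v_1$ is bounded, so $Qe^{nu}$ is uniformly bounded and decaying, the near-diagonal term is $O(1)$, and your argument closes. So your proposal proves the special case needed for the application in this paper, but not the theorem as stated for arbitrary normal solutions.
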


Now we are ready to give the proof of Theorem \ref{thm:main theorem}.
\begin{proof}[Proof of Theorem \ref{thm:main theorem}]
Since $l>\frac{n}{2}$, one has
$\max\{0, 2-\frac{2l}{n}\}<1.$
By Theorem \ref{thm: alpha range}, for some 
$\alpha$ satisfying 
$$\max\{0, 2-\frac{2l}{n}\}<\alpha<1,$$
there exists a smooth solution $u(x)$ to the following equation 
$$(-\Delta)^{\frac{n}{2}}u=fe^{nu}.$$
In addition,  $u$ satisfies \eqref{eq:integral_est} and \eqref{eq:total_Q}. Then Theorem \ref{thm: normal solution} implies that $u$ is normal and  Theorem \ref{thm: normal solution}  show that $g=e^{2u}|dx|^2$ is complete. Thus we have finished our proof.
\end{proof}



\end{document}